\documentclass[10pt,reqno]{amsart}
\usepackage{verbatim}
\usepackage{amsthm,amsmath}
\usepackage{mathtools}
\usepackage{tabularx}
\usepackage{array}
\usepackage{enumitem}
\usepackage{longtable}
\usepackage{comment}
\usepackage{MnSymbol}
\usepackage{mathtools}
\usepackage{graphicx}    
\usepackage{xcolor}      
\usepackage{booktabs}    
\usepackage[colorlinks=true,linkcolor=blue,citecolor=blue,urlcolor=blue]{hyperref}
\usepackage[letterpaper,margin=1.25in]{geometry}   
\newcommand{\Q}{\mathbb{Q}}
\newcommand{\Qbar}{\overline{\mathbb{Q}}}
\newcommand{\Z}{\mathbb{Z}}

\newcommand{\E}{\mathcal{E}}
\renewcommand{\P}{\mathbb{P}}
\renewcommand{\O}{\mathcal{O}}
\newcommand{\Jac}{\operatorname{Jac}}
\newcommand{\End}{\operatorname{End}}
\newcommand{\Sym}{\operatorname{Sym}}
\newcommand{\Gal}{\operatorname{Gal}}

\newcommand{\grp}[1]{[#1]}

\newtheorem{theorem}{Theorem}[section]
\newtheorem{proposition}[theorem]{Proposition}
\newtheorem{corollary}[theorem]{Corollary}
\newtheorem{lemma}[theorem]{Lemma}
\theoremstyle{definition}
\newtheorem{remark}[theorem]{Remark}

\setlist[itemize]{leftmargin=18pt,itemsep=2pt,topsep=3pt}
\setlist[enumerate]{leftmargin=20pt,itemsep=2pt,topsep=3pt}

\begin{document}

\title{Rational Torsion on Simple Genus Two Jacobians}

\author{Jennifer S. Balakrishnan}
\address{Department of Mathematics and Statistics, Boston University, 665 Commonwealth Avenue, Boston, MA 02215, USA}
\email{jbala@bu.edu}

\author{Filip Najman}
\address{Department of Mathematics, Faculty of Science, University of Zagreb, Bijeni\v{c}ka cesta 30, 10000 Zagreb, Croatia}
\email{fnajman@math.hr}

\author{Ari Shnidman}
\address{Department of Mathematics, Temple University, 1805 North Broad Street, Philadelphia, PA 19122, USA}
\email{ari.shnidman@temple.edu}

\author{Andrew V. Sutherland}
\address{Department of Mathematics, Massachusetts Institute of Technology, 77 Massachusetts Avenue, Cambridge, MA 02139, USA}
\email{drew@math.mit.edu}

\keywords{genus-two curves, Jacobians, rational torsion,
abelian surfaces, arithmetic geometry}

\date{\today}

\begin{abstract}
We exhibit new subgroups of rational torsion points in geometrically simple Jacobians of genus-two curves over $\Q$. The largest group, which has order $96$ and invariants $[2,2,2,12]$, is realized by curves of the form
\(y^2=x\prod_{t\in T}(x-t^2)\), where  $T = \{a,b,c,u,v\}$ is any set of five positive integers such that
\(a^2+b^2+c^2=u^2+v^2\) and
\(a^4+b^4+c^4=u^4+v^4\). 
We also
find realizations of the groups \(\grp{2,2,20}\), \(\grp{2,2,4,4}\),
\(\grp{2,2,2,8}\), \(\grp{2,4,8}\), and \(\grp{6,6}\).  Finally, we record, to the best of our knowledge, all known subgroups that arise in genus-two Jacobians over $\Q$, in the geometrically simple case and in general.

\end{abstract}

\maketitle

Let $X$ be a smooth projective genus-two curve over $\Q$, with affine model
\[y^2 = a_6 x^6 + a_5 x^5 + a_4 x^4 + a_3 x^3 + a_2 x^2 + a_1 x + a_0.\]
 Like an elliptic curve, $X$ admits an involution $\iota(x,y) =  (x,-y)$, but unlike an elliptic curve, the rational points of $X$ are not a group in any natural way.   To obtain a group structure, we instead pass to the Jacobian $J = \Jac(X)$, an abelian surface over $\Q$ that plays a crucial role in  the arithmetic and geometry of $X$.
 
Higher-dimensional abelian varieties are generally difficult to work with, but genus-two Jacobians are quite concrete. The non-zero elements of $J(\Q)$ are in bijection with unordered $\Gal(\bar\Q/\Q)$-stable pairs of points on $X$ {\it not} of the form $\{P, \iota(P)\}$, i.e.\ non-canonical effective divisors $D$ of degree $2$. The element $0 \in J(\Q)$ corresponds to any canonical divisor.  The group law $\oplus$ on $J(\Q)$ is defined (roughly) as follows. The inverse of $D$ is $\iota(D)$, and to add $D_1 = \sum_{i = 1}^2 (x_i,y_i)$ and $D_2 = \sum_{i = 3}^4(x_i,y_i)$, we consider the cubic polynomial $\alpha \in \Q[x]$ such that $\alpha(x_i) = y_i$ for all $i \in \{1,2,3,4\}$. The intersection $X \cap \{y = \alpha(x)\}$ consists of the four points $(x_i,y_i)$ and two others:  $D_3 = (x_5,y_5) + (x_6,y_6)$. We then define $D_1 \oplus D_2 = -D_3$, or in other words $D_1 \oplus D_2 \oplus D_3 = 0$ in $J(\Q)$.

The group \(J(\Q)_{\mathrm{tors}}\) of rational torsion points on $J$ is elementary to define, and by the Mordell--Weil theorem it is a finite abelian group. But little is known about which finite abelian groups $G$ arise in this way. 
In contrast with Mazur's theorem for elliptic curves \cite{Mazur1977}, even a conjectural
complete list of possibilities over \(\Q\) is not known in genus two, as it is generally very hard to determine whether the corresponding Siegel modular threefold with $G$-level structure has rational points. Nevertheless, searches, modular
constructions, continued fractions, and explicit descent have
produced a large collection of examples
\cite{BLP2009,DaowsudSchmidt,DaowsudSchmidtCorr,Flynn1990,Leprevost1995,LPS2004,Elkies2002,KuruSadek,Ogawa1994,PZP2013, Nicholls2018}.

We say $J$ is {\it geometrically split} if the base change $J_{\bar \Q}$ is isogenous to a
product of elliptic curves.  Howe, Lepr\'evost, and
Poonen have constructed genus-two Jacobians with large torsion subgroups by ``gluing'' elliptic curves with large torsion orders, thereby realizing groups of orders as large as \(96\)
and \(128\) \cite{HLP2000}.  In this paper, we focus on the case where \(J_{\Qbar}\) is
simple.  In other words, we look for large torsion on abelian surfaces that are unrelated to elliptic curves. While $\#J(\Q)_{\mathrm{tors}}$ is uniformly bounded in the geometrically split case, no natural number has been ruled out as a possibility for $\#J(\Q)_{\mathrm{tors}}$ in general.  

The LMFDB \cite{BSSVY,LMFDB} and the new Booker--Sutherland database \cite{BookerSutherland} provide
an empirical baseline for this question, including 65 isomorphism classes of torsion subgroups arising for geometrically simple genus-two Jacobians over~\(\Q\). The goal of this paper is to discover more groups and to organize what is known.  We write
\[
 \grp{n_1,\ldots,n_r}
   =\Z/n_1\Z\times\cdots\times\Z/n_r\Z,
 \qquad n_1\mid\cdots\mid n_r.
\]
Prior to this work, the largest documented torsion order for a geometrically simple genus-two Jacobian over \(\Q\) was \(80\), coming from Elkies' two-parameter 
\(\grp{2,2,2,10}\)-family \cite{Elkies2024}.  The largest known prime-order torsion subgroup \(\grp{31}\) comes from a recent survey carried out by Epoch AI using GPT-5.6 Sol \cite{EpochAI}, which found an example in the 2022 GitHub repository \cite{costa} corresponding to an abelian surface of $\mathrm{GL}_2$-type associated to the modular form \href{https://www.lmfdb.org/ModularForm/GL2/Q/holomorphic/1830/2/a/q/}{1830.2.a.q}, which is identified in \cite{AlessandriCoppola} as a 31-torsion candidate.  These examples are not among the 65 noted above because their realizations involve Jacobians whose conductors exceed the bounds imposed by the LMFDB and the Booker--Sutherland dataset.  This yields a total of 67 torsion subgroups known to arise for geometrically simple genus-two Jacobians over $\Q$, prior to the present work.

In this article we present realizations of six new torsion subgroups, bringing the total to 73.
Our most interesting finding is genus-two curves satisfying
\[
 J(\Q)_{\mathrm{tors}}\simeq\grp{2,2,2,12},\qquad
 \#J(\Q)_{\mathrm{tors}}=96,
\]
which is now the largest torsion subgroup known to arise for a geometrically simple genus-two Jacobian over $\Q$.
We also exhibit a new order-\(80\) torsion subgroup, \(\grp{2,2,20}\), along with the groups \(\grp{6,6}, \grp{2,4,8},\grp{2,2,2,8},\grp{2,2,4,4}\), which were not previously known to arise for geometrically simple genus-two Jacobians over $\Q$.

Table \ref{tab:census} records what we have found and organizes all groups known to be realized in geometrically simple Jacobians. At the moment,  many of the largest known groups $G$ are only known to arise on special loci within the corresponding Siegel modular threefold $A_2(G)$; see Section \ref{sec: special loci}. Are these loci simply more accessible to human search or are they fundamentally more amenable to larger torsion orders? We leave this as a guiding question going forward.    

For completeness, we also include Table \ref{tab:splitcensus}, which summarizes the currently known torsion subgroups in the geometrically split case.

The article is organized as follows.
In Section \ref{sec: basic facts} we recall relevant background on genus-two Jacobians, Mumford representation, and torsion points of small order. 
In Section \ref{sec: small and two-power}, we say more about 2-power torsion subgroups. In Section \ref{sec:symmetric}, we explain how the surface of general type defined by the equations 
\begin{align*}
    x_1^4 + x_2^4 + x_3^4 &= x_4^4 + x_5^4\\
    x_1^2 + x_2^2 + x_3^2 &= x_4^2 + x_5^2\\
    \prod_i x_i \prod_{i< j} (x^2_i &- x_j^2) \neq 0
\end{align*} 
parameterizes a natural family of genus-two Jacobians with $[2,2,2,12]$-torsion, and we exhibit infinitely many rational points on this surface. Section \ref{sec:other examples} presents some of the other new families and examples. Finally, in Tables \ref{tab:census} and \ref{tab:splitcensus}, we collect, to the best of our knowledge, all groups known to arise as $J(\Q)_{\mathrm{tors}}$ for some geometrically simple (resp.\ split) genus-two Jacobian $J$ over $\Q$.   

We hope that the results and tables in this paper will encourage others to explore rational points on the moduli spaces $A_2(G)$ and their special loci. Like modular curves, they are beautiful mathematical objects that tend to have many symmetries and remarkably simple equations relative to their geometric complexity. By studying enough examples, perhaps we will begin to understand whether and why $A_2(G)(\Q)$ should be empty for $\#G$ large enough, as is predicted.  While we expect this question to remain open for the foreseeable future, Section \ref{sec: open questions} gives many intermediate open questions that are more accessible.

\subsection*{AI use disclosure} 
Throughout this project, frontier AI models were used for mathematical exploration and triage, writing code, carrying out computations, and organizing data. Details of our workflow can be found in Section \ref{sec: ai}.  Aside from Tables~\ref{tab:census} and~\ref{tab:splitcensus}, which were formatted by AI-written code reading independently verified source data, this manuscript was written entirely by the authors.  AI tools were used for proofreading but not editing.

All mathematical results stated in this manuscript, as well as AI-generated code and computations verifying them, have been independently verified by the authors, who take full responsibility for the accuracy and content of this work.  A complete set of human-verified data and Magma \cite{Magma} source code sufficient to verify all mathematical claims not directly supported by arguments made in this manuscript is available in the GitHub repository~\cite{certification_repo}.

\subsection*{Acknowledgements}
This project started at the 2025 ICERM workshop ``Algebraic Points on Curves'', and we are grateful to the organizers of the workshop, as well as ICERM, for facilitating productive discussions. Additionally, we thank Noam Elkies, Sam Frengley, Sachi Hashimoto, Brendan Hassett, Jef Laga, Adam Logan, and Yuri Zarhin for helpful conversations.
J.S.B. was partially supported by NSF DMS-1945452 CAREER and DMS-2502687.
F.N. was funded by the Croatian Science Foundation under the project no. IP-2022-10-5008, by the  project ``Implementation of cutting-edge research and its application as part of the Scientific Center of Excellence for Quantum and Complex Systems, and Representations of Lie Algebras'', PK.1.1.10.0004, co-financed by the European Union through the European Regional Development Fund -- Competitiveness
and Cohesion Programme 2021--2027, and by the European Union – NextGenerationEU through the
National Recovery and Resilience Plan 2021–2026, via an institutional grant of the University
of Zagreb Faculty of Science, IK IA 1.1.3, Impact4Math.
A.S. was partially funded by the
European Research Council (ERC, CurveArithmetic, 101078157), as well as the Ambrose Monell
Foundation.
A.V.S. was partially supported by grants from the Simons Foundation (MPS-Infrastructure-00008651), Renaissance Philanthropy (AI for Math), and DARPA (expMath).

\section{Preliminaries on genus-two Jacobians}\label{sec: basic facts}

Let $k$ be a field of characteristic not $2$, and let $X$ be a smooth projective geometrically connected curve over $k$ of genus two. Let $K$ denote a canonical divisor. The linear system $|K|$ gives rise to a double cover $\pi \colon X \to \P^1$ branched at six points, and $X$ has a model in weighted projective space $\P(1,3,1)$ 
\[y^2 + h(x,z)y = f(x,z),\]
where $h$ is cubic, $f$ is sextic, and the hyperelliptic involution is $\iota(x:y:z) = (x:-h(x,z) - y:z)$. We may choose a model $y^2 = f'(x,z)$ with $h = 0$ by taking $f' = 4f + h^2$. For more on the arithmetic of hyperelliptic curves, see \cite{Stoll2001}. 

Fix a model $y^2 = f(x,z)$. The canonical divisors are the scheme-theoretic fibers $\pi^{-1}([x : z])$ for some $[x : z] \in \P^1$. The ramification points of $\pi$ are the six Weierstrass points $W = (x : 0 : z) \in X(\overline{k})$, where $[x : z]$ is a root of $f(x,z)$. Note that $2W$ is a canonical divisor on $X_{\bar k}$.   

\subsection{Rational points on \texorpdfstring{$J$}{\it J} and the addition law}
Consider a model $X \colon y^2 + hy = f$.  The Jacobian $J = \Jac(X)$ is an abelian surface over $k$ parameterizing linear equivalence classes of degree $0$ divisors on $X$.  A non-zero $p \in J(k)$ can be written {\it uniquely} as $[D -K]$, for an effective divisor $D$ of degree $2$. 

Thus, $J(k)\setminus \{0\}$ is in bijection with the complement of  the canonical divisors in  $\Sym^2(X)(k)$, where $ \Sym^2(X) = X^2/S_2$. We sometimes identify $p \in J(k)$  with the corresponding $D \in \Sym^2(X)(k)$.  A non-canonical $D \in \Sym^2(X)(k)$ can be described by equations of the form
\[Q(x,z) = 0, y = \alpha(x,z)\]
for a homogeneous quadratic $Q \in k[x,z]$ (unique up  to scaling) and a homogeneous cubic $\alpha \in k[x,z]$ satisfying $Q \mid \alpha^2 + h\alpha - f$ (unique up to translations by multiples of $Q$).  The pair $(Q,\alpha)$ is a {\it Mumford representation} of $D -K$.

The addition law on $J(k)$ is as follows.   The inverse of $D-K$ is $\iota(D)-K$, since $D + \iota(D) \equiv 2K$ (where $\equiv$ denotes linear equivalence). Given non-zero $D_1 = (Q_1,\alpha_1)$ and $D_2 = (Q_2, \alpha_2)$ in $J(k)$, we wish to find a Mumford representation $D = (Q,\beta)$ of $D_1+D_2-2K$. If $D_1 = P + R$ and $D_2 = \iota(P) + S$ for points $P,R,S$, then $D = R+S$,  so we will assume that this is not the case.  Then, by Riemann--Roch, there exists a unique cubic $\alpha \in k[x,z]$ such that  $\mathrm{div}(y - \alpha) + 3K \geq D_1 + D_2$, or in other words $\alpha(x_i,z_i) = y_i$ for all $P_i = [x_i : y_i : z_i]$ in the support of $D_1+D_2$.
Then $\alpha^2 + \alpha h - f$ is divisible by both $Q_1$ and $Q_2$, hence equals $Q_1Q_2Q$ for some quadratic $Q$.   The divisor of the function $y - \alpha$ is $D_1 + D_2 + D_3 - 3K$, so we may take $D = -D_3 = (Q, -h - \alpha)$.  

\subsection{Points of order 2}
The $15$ classes $D - K$ of order $2$ in $J(\overline{k})$ are the pairs $D = W_1+W_2$ of distinct Weierstrass points. Indeed, $2(W_1+W_2) \equiv 2K$ and conversely there are $\binom{6}{2} = 15$ such pairs.  In particular, we have $J[2](k) \simeq [2,2,2,2]$ if and only if $X$ has affine model $y^2 = f(x)$, where $f(x)$ splits completely over $k$. We may then move two Weierstrass points to $0$ and $\infty$, giving an affine model of the form $y^2 = x\prod_{i = 1}^4 (x+a_i)$ for $a_i \in k$.

In general, if $X$ has model $y^2 +hy= f$, then the elements of order $2$ in $J(k)$ are in bijection with the degree two factors of $4f +h^2$. 

\subsection{Points of order 3}\label{subsec: points of order 3}
Suppose $[D - K] \in J(k)$ has order $3$. Choosing $\alpha \in k[x,z]$ so that $y - \alpha$ has divisor $3D - 3K$, and then translating the model so that $\alpha = 0$, we obtain a new model $y^2 + hy = \lambda Q^3$ for some scalar $\lambda$. Completing the square, we recover the well-known fact that the Jacobian of a genus-two curve $y^2 = f(x)$ has a $k$-rational $3$-torsion point if and only if $f$ can be written as $f = h^2 - \lambda Q^3$, for a cubic $h$, a quadratic $Q$, and a scalar $\lambda$.

\subsection{Points of order 6}

It is not hard to find a three-parameter family of curves with $[6] \hookrightarrow J(k)_{\mathrm{tors}}$, but the formulas are somewhat messy.  For later use, we instead highlight the following two-parameter family:

\begin{theorem}\label{thm: M(6)}
    There exists $P \in X(k)$ and a Weierstrass point $W \in X(k)$ such that $[P - W] \in J(k)$ has order $6$ if and only if $X$ has an affine model 
    \[y^2 = (x^3 + h_1x^2 + h_2x + h_3)^2 - h_3^2,\]
    with $h_i \in k$.
    Letting $\infty_+$ and $\infty_-$ be the rational points at infinity, the divisor classes $[\infty_\pm - (0,0)]$ have order $6$, while the classes $[\pm (\infty_+ - \infty_-)]$ have order $3$.
\end{theorem}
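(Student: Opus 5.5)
We prove the two directions separately. For the "if" direction we compute principal divisors on the given model directly. For the "only if" direction we use the description of $3$-torsion in \S\ref{subsec: points of order 3}, applied to the class $2[P-W]$.

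\emph{The model has the stated torsion.} Let $X\colon y^2=h^2-h_3^2$ with $h=x^3+h_1x^2+h_2x+h_3$, and assume the right side is squarefree of degree $6$ (so $h_3\neq 0$).
\begin{itemize}
\item Since the leading coefficient is $1$, the two points $\infty_\pm$ at infinity are rational. Since $f(0)=h_3^2-h_3^2=0$, the point $W=(0,0)$ is a rational Weierstrass point.
\item We have $(y-h)(y+h)=-h_3^2$, a nonzero constant. Both $y\pm h$ lie in $L(3\infty_++3\infty_-)$, and $y-h$ has no affine zeros or poles. Label $\infty_\pm$ so that $y\approx h$ at $\infty_+$. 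Then $y+h$ has a pole of order $3$ at $\infty_+$, which forces $\operatorname{div}(y-h)=3\infty_+-3\infty_-$.
\item Also $\operatorname{div}(x)=2W-\infty_+-\infty_-$.
\end{itemize}
Hence $2(\infty_+-W)\equiv\infty_+-\infty_-$, and this class is killed by $3$.

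It is nonzero, since $X$ has genus two and is not rational. So $[\pm(\infty_+-\infty_-)]$ has order exactly $3$, and $[\infty_+-W]$ has order $3$ or $6$.

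To exclude order $3$, suppose $3(\infty_+-W)\equiv 0$. Since $2W\equiv\infty_++\infty_-$, this gives
\[
3\infty_+\equiv 3W\equiv W+\infty_++\infty_-, \quad\text{so}\quad 2\infty_+\equiv W+\infty_-.
\]
Because $\infty_+$ is not Weierstrass, $2\infty_+$ is non-canonical and is therefore the unique effective divisor in its class. But $2\infty_+\neq W+\infty_-$, a contradiction. So $[\infty_+-W]$ has order $6$. The same argument with $\iota$ applied handles $\infty_-$.

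\emph{Any such $P,W$ yield the model.} Suppose $[P-W]$ has order $6$.
\begin{itemize}
\item $P$ is not Weierstrass, otherwise the order would divide $2$. Hence $x(P)\neq x(W)$.
\item Apply a $k$-rational Möbius transformation in $x$ sending $x(P)\mapsto\infty$ and $x(W)\mapsto 0$. The model becomes $y^2=f(x)$ with $\deg f=6$, $f(0)=0$, and $P=\infty_+$ rational. In particular the leading coefficient of $f$ is a square, and after scaling $y$ we may take it to be $1$.
\item Now $2[P-W]=[2\infty_+-K]$ has order $3$. By \S\ref{subsec: points of order 3}, there is a cubic $\alpha$ with $\operatorname{div}(y-\alpha)=6\infty_+-3K$. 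Here $D=2\infty_+$ corresponds to $Q=z^2$.
\item The argument there then gives $f=h^2-\lambda z^6$, that is, $f=h(x)^2-c$ with $h$ a cubic and $c\in k$. Comparing leading coefficients, $h$ has leading coefficient $\pm 1$, and replacing $h$ by $-h$ we may take $h$ monic.
\item Writing $h=x^3+h_1x^2+h_2x+h_3$, the condition $f(0)=0$ gives $c=h_3^2$. This is exactly the stated model.
\end{itemize}

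The step needing the most care is the normalization in the converse. Once the form $f=h^2-c$ is in hand, the remaining steps are immediate, so the real work is checking that the coordinate change is defined over $k$, that the completed-square form from \S\ref{subsec: points of order 3} really has $Q=z^2$ after moving $P$ to infinity, and that the scalings of $x$ and $y$ can be chosen over $k$ to make $h$ monic.
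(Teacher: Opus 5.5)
Your proposal is correct and follows the paper's proof in substance. In the converse you use the cubic $\alpha$ with $\mathrm{div}(y-\alpha)=6P-3K$, i.e.\ the $3$-torsion normal form for $[2P-K]$ after moving $P$ to infinity, to get $f=h^2-c$, and the rational Weierstrass point then forces $c=h_3^2$. In the forward direction you use the order-$3$ class $[\infty_+-\infty_-]$ supported at infinity, and the same observation as the paper that order $3$ would give $2\infty_+\equiv\infty_-+W$. The only differences are cosmetic: you compute $\mathrm{div}(y-h)$ and $\mathrm{div}(x)$ explicitly, and you move $W$ to $0$ before completing the square rather than after.
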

\begin{proof}
    Since $2P + 2P + 2P \equiv 3K$, we may choose a cubic $\alpha(x)$ such that the divisor of $y - \alpha$ has multiplicity $6$ at $P$. Translating the $y$-coordinate by $\alpha$ and moving $P$ to infinity, we may assume $\alpha = 0$ and $X$ has affine model
    \[y^2 + h(x)y = \lambda,\]
    for some cubic $h(x)$ and constant $\lambda$. Completing the square and taking $t = -4\lambda$, $X$ has a model of the form
    \[y^2 = h(x)^2 - t.\]
    Since $[P - W] = [P+W - K]$ is rational, so is $W$, so $h(x)^2 - t$ has a root and in particular $t$ is a square. Moving the root to $x = 0$, we obtain the desired model.
    
    Conversely, suppose $X$ has model $y^2 = h(x)^2 - h_3^2=:f(x)$ as above. Then $f = f_1f_2f_3$ with $f_i$ of degree $i$, so $X$ has a rational Weierstrass point $(0,0)$ and $J(k)$ has a point of order $2$. Taking $Q = 1$ in Section \ref{subsec: points of order 3}, we also see that $J(k)$ has two points of order three supported at infinity, which are double the classes $[\infty_{\pm} - (0,0)]$. The latter classes have order $6$, since there are no order $3$ classes of the form $P - W$ on a genus-two curve, as this would imply $2P \equiv \iota(P) + W$. 
\end{proof}

Informally, Theorem \ref{thm: M(6)} shows that the  space $M(6)$ of triples $(X,W,P)$, where $X$ is a genus-two curve, $W \in X$ is a Weierstrass point and $P \in X$ is a point such that $P - W \in J(k)$ has order $6$, is rational. Indeed, it is birational to the weighted projective space $\P(1,2,3)$ in the parameters $h_1, h_2, h_3$.

\section{Large 2-groups}\label{sec: small and two-power}

In this section, we suppose that $J[2](k) = [2,2,2,2]$. In this case, $2$-descent supplies a homomorphism 
\[J(k)/2J(k) \stackrel{\delta}{\hookrightarrow} (k^\times/k^{\times2})^4\]  
that can be made very explicit; see \cite{Stoll2001} for further details.
\begin{lemma}\label{lem: division by 2 via descent}
    If $X/k$ has a model
    \[X \colon y^2 = x(x - a_1)(x-a_2)(x-a_3)(x-a_4)\] 
    and  $P = (x_0,y_0) \in X(k)$ with $y_0 \neq 0$, then $\delta(P - \infty) = (x_0 - a_1,x_0 - a_2, x_0 -a_3, x_0 - a_4)$. 
    If $D-2\infty \in J(k)$ has Mumford representation $(q,\alpha)$ such that $q(a_i) \neq 0$, then \[\delta(D - 2\infty) = (q(a_1), q(a_2), q(a_3), q(a_4)).\]    
\end{lemma}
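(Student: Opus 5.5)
The lemma depends on how $\delta$ is normalized, so the first step is to fix that normalization. I would take the standard $x-T$ map of \cite{Stoll2001}. Let $f(x)=x\prod_{i=1}^4(x-a_i)$ and let $L=k[T]/(f(T))\simeq k^5$, where the isomorphism is evaluation at the roots $0,a_1,\dots,a_4$. On divisors $\sum n_P P$ of degree $0$ whose support avoids $\infty$ and the Weierstrass points, set $\delta\bigl(\sum n_P P\bigr)=\prod (x(P)-T)^{n_P}$. This takes values in the kernel of the norm $L^\times/L^{\times2}\to k^\times/k^{\times2}$, and that kernel is identified with $(k^\times/k^{\times 2})^4$ by dropping the coordinate at the root $0$. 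This coordinate is determined by the other four, since the product of all five coordinates is a square. In this normalization, the coordinate indexed by $a_i$ is the class of the function $x-a_i$ evaluated on divisors, up to the global sign convention $x(P)-a_i$ versus $a_i-x(P)$. A sign flip is harmless in the first formula, since it multiplies by $(-1)^{\deg}=1$. I would state this convention explicitly, and everything then reduces to evaluating functions on divisors.

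Next I would justify that the map is well defined and how it is evaluated. Weil reciprocity shows that $x-a_i$ is trivial modulo squares on principal divisors. Indeed, $\operatorname{div}(x-a_i)=2W_i-2\infty$ is twice a divisor, so for a principal divisor $\operatorname{div}(g)$ we get $(x-a_i)(\operatorname{div} g)=g(\operatorname{div}(x-a_i))=g(W_i)^2/g(\infty)^2$, which is a square. Hence $\delta$ descends to $J(k)$, and its kernel contains $2J(k)$. The injectivity of $J(k)/2J(k)\hookrightarrow(k^\times/k^{\times2})^4$ is part of the cited descent theory, and I would not reprove it. To evaluate $\delta$ on a class whose representative involves $\infty$, I choose a linearly equivalent representative with support away from $\infty$ and the $W_i$. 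Alternatively, I use the standard convention that the value at $\infty$ is the leading coefficient, which is $1$ because $x-a_i$ is monic and $\infty$ has odd order: $\infty$ is a Weierstrass point with uniformizer $x^2/y$. With that convention, $(x-a_i)(P-\infty)=x_0-a_i$.

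For the Mumford statement, write $D=P_1+P_2$ with $q(x)=(x-x(P_1))(x-x(P_2))$, taking $q$ monic. Then
\[(x-a_i)(D-2\infty)=(x(P_1)-a_i)(x(P_2)-a_i)=q(a_i).\]
This uses Galois invariance, so the computation can be done over $\bar k$ and the result lands in $k$. When some $P_j=\infty$, so that $\deg q<2$, the same leading-coefficient convention applies. The hypothesis $q(a_i)\neq0$ guarantees that the support of $D$ misses $W_i$, so no correction is needed. If $q$ is not monic, scaling by its leading coefficient changes all four coordinates by the same scalar, so I would normalize $q$ to be monic.

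The main obstacle is the careful treatment of $\infty$: the value of $x-a_i$ at $\infty$, where the function has a pole. Either I justify the leading-coefficient normalization, or I avoid the issue entirely. To avoid it, I would replace $\infty$ by another divisor in its class, namely $\infty\sim 2\infty - \infty$ rewritten via $\operatorname{div}(x-c)=P_c+\iota P_c-2\infty$ for a generic rational $c$. This gives $2\infty\sim P_c+\iota P_c$. Evaluating on that divisor produces a factor $(c-a_i)^2$, which is a square, and this recovers the same formulas.
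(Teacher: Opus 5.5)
Your setup is what the paper's one-line proof cites from \cite[\S4]{Stoll2001}. There, $\delta$ is the $x-T$ map with the coordinate at the root $0$ dropped, its $a_i$-coordinate is evaluation of $x-a_i$, and Weil reciprocity against $\operatorname{div}(x-a_i)=2W_i-2\infty$ makes it well defined on $J(k)$. Your proof of the second formula is fine: for $\deg q=2$ the divisor $D$ avoids $\infty$, and $D-2\infty\sim D-P_c-\iota P_c$ gives $q(a_i)/(c-a_i)^2$.

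\textbf{The gap is in the first formula, at the step you flagged.} Your fallback does not work there. The relation $\operatorname{div}(x-c)=P_c+\iota P_c-2\infty$ only trades $2\infty$ for $P_c+\iota P_c$. So $\infty\sim P_c+\iota P_c-\infty$ still contains $\infty$ with odd multiplicity, and no rewriting of this kind removes the single $\infty$ from $P-\infty$. The leading-coefficient convention does give the correct value $1$. But you only assert, without proving, that extending the evaluation to $\infty$ this way still kills principal divisors, and that is the real content of the normalization.

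You can close it in either of two ways.
\begin{itemize}
\item \emph{Tame symbols.} Since $\operatorname{ord}_Q(x-a_i)$ is even at every point $Q$, the normalized value $\bigl((x-a_i)\,t_Q^{-\operatorname{ord}_Q(x-a_i)}\bigr)(Q)$ is well defined modulo squares. The tame-symbol form of Weil reciprocity then shows it is a square on principal divisors. At $\infty$, with $t=x^2/y$, one gets $(x-a_i)x^4/f(x)\to 1$.
\item \emph{Explicit moving.} Use a function with a simple zero at $\infty$. For generic $\ell,h\in k[x]$ with $\deg\ell\le 2$ and $h$ monic cubic, $g=(y-\ell)/h$ has $\operatorname{ord}_\infty g=1$ and $\operatorname{div}(g)=Z-H+\infty$. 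Here $\pi_*Z$ is the zero divisor of the monic quintic $f-\ell^2$, and $H=\pi^*(\operatorname{div}_0 h)$. Then $P-\infty\sim P+Z-H$. Since $f(a_i)=0$, you get $(x-a_i)(Z)=\ell(a_i)^2$ and $(x-a_i)(H)=h(a_i)^2$, which leaves $x_0-a_i$.
\end{itemize}

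Separately, your remark that the second formula persists when $\deg q<2$ is wrong. For $D=P+\infty$ with monic $q=x-x_0$, you get $\delta_i=x_0-a_i=-q(a_i)$. In general the value is $(-1)^{\deg q}q(a_i)$, so the formula as stated needs $\deg q=2$.
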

\begin{proof}
    See \cite[\S4]{Stoll2001}. 
\end{proof}

We will also use the following complement of Lemma \ref{lem: division by 2 via descent}.

\begin{lemma}\label{lem: descent formula for two-torsion points}
    For $X$ as above, we have
    \begin{enumerate}
        \item $\delta((0,0) - \infty) = (-a_1,-a_2,-a_3,-a_4)$,
        \item $\delta((a_1,0) - \infty) = (a_1\prod_{j \neq 1}(a_1  -a_j), a_1 - a_2, a_1 - a_3, a_1 - a_4)$,
        \item $\delta((a_2,0) - \infty) = (a_2 - a_1, a_2\prod_{j \neq 2}(a_2  -a_j), a_2 - a_3, a_2 - a_4)$,
        \item $\delta((a_3,0) - \infty) = (a_3 - a_1, a_3 - a_2, a_3\prod_{j \neq 3}(a_3  -a_j), a_3 - a_4)$,
        \item $\delta((a_4,0) - \infty) = (a_4 - a_1,a_4 - a_2,a_4 - a_3, a_4\prod_{j \neq 4}(a_4  -a_j))$.
        \end{enumerate}
        In particular, $(0,0) - \infty \in 2J(k)$ if and only if $-a_i \in k^{\times 2}$ for all $i$, and $(a_i,0) - \infty \in 2J(k)$ if and only if $a_i \in k^{\times 2}$ and $a_i - a_j \in k^{\times2}$ for all $j \neq i$.
 
\end{lemma}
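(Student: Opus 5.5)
The plan is to compute $\delta$ of each two-torsion class from the Mumford-representation formula in Lemma~\ref{lem: division by 2 via descent}, using the standard convention in Stoll's description of the descent map. At a coordinate where that quadratic vanishes, we replace the entry by the value forced by the product relation.

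\textbf{Setup.} Let $f(x) = x\prod_j (x-a_j)$, and write $\delta = (\delta_1,\dots,\delta_4)$, where $\delta_i$ is the $x-a_i$ component. The full descent map has five components, one for each finite root $e\in\{0,a_1,\dots,a_4\}$, with image in $(k^\times/k^{\times2})^5$. These components are subject to the relation that their product is a square, because the leading coefficient of $f$ is $1$. So the map factors through the four components indexed by the $a_i$, and the $x$-component equals the product of the other four up to squares. I will keep this relation in mind throughout.

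\textbf{Case of $(0,0)-\infty$.} The Weierstrass point $(0,0)$ has Mumford representation $(q,\alpha) = (x,0)$ in the dehomogenized form. Evaluating $q$ at the $a_i$ is not obviously the right recipe, since $q$ has degree $1$: the divisor is $(0,0)-\infty$, not $D-2\infty$ with $\deg D = 2$. Instead I use the first formula of Lemma~\ref{lem: division by 2 via descent}, namely $x_0 - a_i$ at $x_0 = 0$. It remains valid when $y_0 = 0$ as long as $x_0 \ne a_i$, because each component $x - a_i$ is evaluated away from its own zero. This gives $\delta((0,0)-\infty) = (-a_1,-a_2,-a_3,-a_4)$, which is (1).

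\textbf{Case of $(a_i,0)-\infty$.} The same rule gives $a_i - a_j$ for $j\neq i$. The $i$-th entry is the problematic one, since $x-a_i$ vanishes at that point. Here I use the standard fix: the $i$-th component is determined by the product relation. The product over all five components, including the $x$-component $a_i$, must be a square. Hence $\delta_i \equiv a_i\prod_{j\neq i}(a_i-a_j)$ modulo squares. Equivalently, $\delta_i$ is $g_i(a_i)$, where $g_i = f/(x-a_i)$, which is the usual rule for the vanishing factor. This gives (2)--(5).

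\textbf{The ``in particular''.} Lemma~\ref{lem: division by 2 via descent} states that $\delta$ is injective on $J(k)/2J(k)$. So a class lies in $2J(k)$ iff its image is trivial. For $(0,0)-\infty$ this means every $-a_i$ is a square. For $(a_i,0)-\infty$ it means every $a_i-a_j$ with $j \ne i$ is a square, and also $a_i\prod_{j\ne i}(a_i-a_j)$ is a square. Given the first conditions, the last is equivalent to $a_i$ being a square.

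\textbf{Main obstacle.} The hard step is justifying the rule for the vanishing component. Lemma~\ref{lem: division by 2 via descent} as stated excludes this case, so I would cite \cite[\S4]{Stoll2001}. Alternatively, I would prove it directly: move the class to a linearly equivalent divisor avoiding the bad fibre. For example, add a generic rational principal divisor, or write $(a_i,0)-\infty$ as the sum of the other four two-torsion classes via the relation $\sum_e [(e,0)-\infty] = 0$. Then use that $\delta$ is a homomorphism together with the values already computed. Doing this for the $i$-th component yields $\prod_{j\ne i}(a_j-a_i)\cdot(-a_i)$, which agrees with $a_i\prod_{j\neq i}(a_i-a_j)$ up to the sign $(-1)^4=1$. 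This gives a self-contained check.
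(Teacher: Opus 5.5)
Your proposal is correct and follows essentially the route the paper has in mind. The paper gives no separate proof: it treats the lemma as an immediate consequence of Stoll's description of the $x-T$ map cited for Lemma~\ref{lem: division by 2 via descent}, with the entry at the vanishing factor fixed by the product-is-a-square (norm) condition, exactly as you do. Your extra check via $\sum_e [(e,0)-\infty] = [\mathrm{div}(y)] = 0$ and the homomorphism property is a sound, self-contained confirmation of that convention, and your sign bookkeeping $(-1)\cdot(-1)^3=1$ is right.
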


From these lemmas, we obtain explicit divisible-by-$2$ criteria, and hence explicit conditions for $4$- and $8$-torsion.
\begin{lemma}\label{lem: 2244}
Let $X$ and $J$ be as above. 

\begin{enumerate}
\item[\textup{(a)}] We have $[2,2,2,2]\subseteq J(k)$ if and only if $X$ has a model of the form
\[
X\colon y^2=x(x+A)(x+B)(x+C)(x+D),
\]
for distinct non-zero $A,B,C,D \in k$.
\item[\textup{(b)}] We have $[2,2,2,4]\subseteq J(k)$ if and only if we can choose a model in \textup{(a)} with $A = a^2,B = b^2,C = c^2,D = d^2$ squares.  Then $(0,0) - \infty \equiv 2(D_0-2\infty)$, where $D_0$ has Mumford representation 
\[(x^2 - s_2x + s_4, (s_1s_2-s_3)x - s_1s_4),\] 
and where $s_i$ is the $i$th elementary symmetric polynomial in $a,b,c,d$. 
\item[\textup{(c)}] We have $[2,2,4,4]\subseteq J(k)$ if and only if we can choose the model in \textup{(b)} so that 
\[
(C-A)(C-B),\  (C-A)(D-A),\  (C-B)(D-B),\  (D-A)(D-B)
\]
are all squares, in which case $(-A,0) - (-B,0)\in 2J(k)$.

\item[\textup{(d)}] We have $[2,2,2,8] \subset J(k)$ if and only if we can choose the model in \textup{(b)} so that 
\[a(a+d)(a+c)(a+b), b(b+d)(b+c)(a+b), c(c+d)(b+c)(a+c), d(c+d)(b+d)(a+d)\]
are all squares, in which case $[D_0-2\infty] \in 2J(k)$.

\item[\textup{(e)}] We have $[2,2,4,8]\subset J(k)$ if and only if we can choose a model in \textup{(c)} that also satisfies \textup{(d)}.
\end{enumerate}
\end{lemma}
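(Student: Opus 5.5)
The plan is to use the full rational $2$-torsion and the descent map $\delta$ of Lemmas~\ref{lem: division by 2 via descent} and~\ref{lem: descent formula for two-torsion points} to translate each containment into a statement about which rational $2$-torsion points, or halves of them, lie in $2J(k)$. Part (a) is the discussion of Section~2.2: $[2,2,2,2]\subseteq J(k)$ iff $f$ splits, and then moving two Weierstrass points to $0$ and $\infty$ gives the stated model. For the later parts I would use the following structural fact. If $J[2]\subseteq J(k)$, then multiplication by $2$ maps $J(k)[4]$ onto $J[2]\cap 2J(k)$ with kernel $J[2]$. Consequently $[2,2,2,4]\subseteq J(k)$ iff some nonzero $2$-torsion point is in $2J(k)$, and $[2,2,4,4]\subseteq J(k)$ iff $J[2]\cap 2J(k)$ has rank at least $2$.

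\emph{Part (b).} The nonzero $2$-torsion classes are $[W_1+W_2-K]$, and any pair $\{W_1,W_2\}$ can be moved by a M\"obius transformation to $\{(0,0),\infty\}$. So we may assume the halvable class is $(0,0)-\infty$. By Lemma~\ref{lem: descent formula for two-torsion points}, this class is in $2J(k)$ iff all $-a_i=A,B,C,D$ are squares. For the explicit half $D_0$, write $P(t)=\prod(t-a_i)=E(t^2)-tO(t^2)$ with $E(x)=x^2+s_2x+s_4$ and $O(x)=s_1x+s_3$. Replacing $t\mapsto -t$ and multiplying gives $\prod(x-a_i^2)=E(x)^2-xO(x)^2$. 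Substituting $x\mapsto -x$ then yields the identity
\[
x\prod(x+a_i^2)-\bigl(xO(-x)\bigr)^2=x\,q(x)^2,\qquad q(x)=x^2-s_2x+s_4 .
\]
Hence the function $y\pm xO(-x)$ has divisor $2D_0+(0,0)-5\infty$, which gives $2D_0\equiv(0,0)+\infty+2K-\ldots$, i.e.\ $2[D_0-2\infty]=[(0,0)-\infty]$. Reducing $xO(-x)$ modulo $q$ gives $\pm\bigl((s_1s_2-s_3)x-s_1s_4\bigr)$, which is the stated Mumford representation.

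\emph{Part (c).} We need a second halvable $2$-torsion class independent of $(0,0)-\infty$. The rank-$2$ subgroups of $J[2]$ containing $e_{\{0,\infty\}}$ are of two types: $\{e_{12},e_{34},e_{56}\}$ (disjoint pairs) or $\{e_{12},e_{13},e_{23}\}$. In the disjoint type we may relabel so that the second class is $(-A,0)-(-B,0)=((-A,0)-\infty)-((-B,0)-\infty)$. Its $\delta$-image is the coordinatewise product of items (2) and (3) of Lemma~\ref{lem: descent formula for two-torsion points}. Since $A,B,C,D$ are already squares, this simplifies modulo squares to exactly the four displayed products. The overlapping type needs a separate check; I expect it either reduces to the disjoint case after changing which pair is sent to $\{0,\infty\}$, or forces a condition like $-1\in k^{\times2}$ that can then be absorbed. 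This bookkeeping is one delicate point.

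\emph{Parts (d) and (e).} A point of order $8$ has a multiple $R$ of order $4$ with $2R$ equal to a $2$-torsion class, which we may normalize to $(0,0)-\infty$. Every half of that class has the form $D_0+T$ with $T\in J[2]$. I claim that the $16$ sign changes of $(a,b,c,d)$ produce exactly these $16$ halves. I expect this to be the main obstacle: showing that flipping one sign translates $D_0$ by a specific $2$-torsion point, and that these translations span $J[2]$. I would attempt it by comparing $\delta$-images, using injectivity of $\delta$ on $J(k)/2J(k)$ only where legitimate, and otherwise by a direct Mumford-representation computation.

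Granting the claim, $[2,2,2,8]\subseteq J(k)$ iff for some sign choice $D_0\in 2J(k)$, i.e.\ $\delta(D_0-2\infty)$ is trivial. By Lemma~\ref{lem: division by 2 via descent}, $\delta(D_0-2\infty)=(q(-a^2),\dots)$, and we have $q(-a^2)=E(a^2)$. Since $P(a)=0$, we get $E(a^2)=aO(a^2)$, and so $P(-a)=2E(a^2)$. This gives $q(-a^2)=a(a+b)(a+c)(a+d)$ (one checks the constant and sign), and similarly for $b,c,d$, which are the four displayed conditions. For (e), $[2,2,4,8]$ requires both an order-$8$ point and a rank-$2$ image $J[2]\cap 2J(k)$. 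One normalizes the doubled-twice class to $(0,0)-\infty$ as in (d), then chooses the second halvable class as in (c), giving exactly the conjunction of the two sets of conditions.
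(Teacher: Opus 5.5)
Your route is the paper's: 2-descent via Lemmas~\ref{lem: division by 2 via descent} and~\ref{lem: descent formula for two-torsion points}, with $\delta(D_0-2\infty)=(q(-a^2),q(-b^2),q(-c^2),q(-d^2))$ for (d). Your computations check out. The identity $x\prod(x+a_i^2)-(xO(-x))^2=xq(x)^2$ gives $\mathrm{div}(y-xO(-x))=(0,0)+2D_0-5\infty$ and the stated Mumford representation up to $\iota$; this replaces the paper's appeal to Zarhin. The $\delta$-image of $(-A,0)-(-B,0)$ reduces to the four products in (c), and $q(-a^2)=E(a^2)=P(-a)/2=a(a+b)(a+c)(a+d)$ exactly. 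However, the two steps you leave open are genuine gaps, which the paper's three-line proof also passes over, and one of your guesses about them is wrong.

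\emph{Sign changes.} Your claim is true, but comparing $\delta$-images cannot prove it. Halves that differ by an element of $J[2]\cap 2J(k)$ have the same $\delta$-image, and this group contains $(0,0)-\infty$. Compare Mumford representations instead. Your identity is uniform in the signs, so each of the $16$ sign choices gives a half of $(0,0)-\infty$. These halves are pairwise distinct; after renaming, it suffices to compare $(a,b,c,d)$ with a sign change of itself.
An odd number of flips negates the constant term $s_4=abcd\neq0$ of $q$.
Flipping exactly two, say $a,b$, changes $s_2$ by $-2(a+b)(c+d)\neq 0$, because $A\neq B$ and $C\neq D$.
Flipping all four gives $\iota(D_0)\neq D_0$, since $[D_0-2\infty]$ has order $4$.
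Since $J[2]\subseteq J(k)$, there are exactly $16$ halves, so the sign choices exhaust them, and the ``only if'' direction of (d) follows. For (e), note also that relabelling $a,b,c,d$ leaves $D_0$ unchanged (its formula is symmetric), while the signs do not affect the conditions in (c). So the two normalizations are compatible.

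\emph{Overlapping pairs.} These cannot be ``absorbed''. In a model as in (b), Lemma~\ref{lem: descent formula for two-torsion points} shows that $(-A,0)-\infty\in 2J(k)$ forces $-A=-a^2$, and hence $-1$, to be a square. The class $(-A,0)-(0,0)$ has the same $\delta$-image, because $\delta((0,0)-\infty)$ is trivial. Equivalently, via Weil pairings, $e_2(2R_1,2R_2)=e_4(R_1,R_2)^2=1$ for $R_1,R_2\in J(k)$ whenever $\mu_4\not\subset k$. So if $\sqrt{-1}\notin k$ (e.g.\ $k=\Q$), any two independent halvable $2$-torsion classes come from disjoint pairs of Weierstrass points, and your arguments for (c) and (e) go through.

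If $\sqrt{-1}\in k$, the ``only if'' direction of (c) (and likewise of (e)) can fail. Over $k=\Q(i)$, take $(A,B,C,D)=(1,(5/4)^2,(13/12)^2,(17/15)^2)$. Since $B-A$, $C-A$, $D-A$ and $-1$ are squares, the classes $(0,0)-\infty$, $(-1,0)-\infty$ and $(-1,0)-(0,0)$ all lie in $2J(k)$, so $[2,2,4,4]\subseteq J(k)$. Now consider $(-B,0)-(-C,0)$, $(-B,0)-(-D,0)$ and $(-C,0)-(-D,0)$. Halving any of them would require one of $C-B=-7/18$, $D-B=-1001/3600$, $D-C=399/3600$ to be a square in $\Q(i)$, and none is. Any other halvable class, plus a suitable element of the triangle above, would produce one of these three. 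So $J[2]\cap 2J(k)$ consists of $0$ and the three triangle classes, no two of which come from disjoint pairs. In any model as in (b), $\{0,\infty\}$ is a halvable pair, and (c) asks for a disjoint halvable pair, so no such model satisfies (c). The proof, and the statement itself, need the hypothesis $\sqrt{-1}\notin k$, which covers the paper's use over $\Q$.
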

\begin{proof}
$(a)$ is clear, while $(b)$ and $(c)$ follow from Lemma \ref{lem: descent formula for two-torsion points}. The formula for the Mumford representation in $(b)$ follows from Zarhin's general division-by-$2$ formula \cite[Example 3.7]{zarhin}. 
Part $(d)$ follows from Lemma \ref{lem: division by 2 via descent} and the formula for $q$ in $(b)$; the four expressions in $(d)$ are exactly the values $q(a_i)$.
\end{proof}

\begin{corollary}
The moduli space of genus-two curves with six marked Weierstrass points and a point of order $4$ in $J(k)$ is birational to $\P^3_{a,b,c,d}$. The curve corresponding to $[a : b : c : d]$ is $y^2 = x(x+a^2)(x+b^2)(x+c^2)(x+d^2)$.
\end{corollary}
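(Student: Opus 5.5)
The corollary should follow from Lemma \ref{lem: 2244}(a),(b) once we make the moduli problem precise and count the normalizations. A genus-two curve with six marked rational Weierstrass points $W_0,\dots,W_5$ can be put uniquely in the form $y^2 = x(x+A)(x+B)(x+C)(x+D)$ by sending $W_0\mapsto \infty$ and $W_1 \mapsto 0$. The remaining Möbius freedom fixing $0$ and $\infty$ is $x\mapsto \lambda x$, which rescales $(A,B,C,D)\mapsto(\lambda A,\lambda B,\lambda C,\lambda D)$. The quadratic twist freedom in $y$ is absorbed because the model is monic of degree five. So the space of marked curves is birational to $\P^3_{A,B,C,D}$, minus the discriminant locus where $A,B,C,D$ fail to be distinct and nonzero.

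Next I will interpret ``a point of order $4$'' in the way the paper's $\P^3_{a,b,c,d}$ parametrization supports: a point $Q$ of order $4$ with $2Q = [(0,0)-\infty]$, i.e.\ a halving of the marked $2$-torsion point $W_1-W_0$. By Lemma \ref{lem: descent formula for two-torsion points}, $(0,0)-\infty \in 2J(k)$ if and only if $A,B,C,D$ are all squares, say $A=a^2,\dots,D=d^2$. Lemma \ref{lem: 2244}(b) then gives the explicit halving $D_0$ with Mumford representation $(x^2-s_2x+s_4,(s_1s_2-s_3)x-s_1s_4)$. This defines a rational map $\P^3_{a,b,c,d} \dashrightarrow M$, where $M$ is the moduli space of marked curves together with such a point. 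It is compatible with scaling, since $(a,b,c,d)\mapsto(\mu a,\dots)$ corresponds to $\lambda=\mu^2$.

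The main step is birationality: I must check that this map is generically injective and dominant. The halvings of $(0,0)-\infty$ form a torsor under $J[2]$, which has $16$ elements. Over $\bar k$, the sign choices $(\pm a,\pm b,\pm c,\pm d)$ modulo the overall $-1$ from scaling number $16/2=8$; with the further $-1$ from the inverse $-D_0$ they should account for all $16$ halvings. I would verify this by checking that distinct sign patterns give distinct $D_0$, for instance via the descent images $q(-a_i^2)$ from Lemma \ref{lem: division by 2 via descent}, and by noting that changing signs changes $D_0$ by a $2$-torsion point. Two consequences follow. First, a given pair (curve, $Q$) determines $(a,b,c,d)$ up to the scaling that is already quotiented out, so the map is injective. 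Second, every $k$-rational halving arises from some $k$-rational $(a,b,c,d)$, so the map is dominant onto $M$.

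The bookkeeping of the $\pm$ signs against the $J[2]$-torsor is where I expect the difficulty. If the correspondence is only $2$-to-$1$, with $\pm Q$ identified, I would instead state the result for the pair $\{\pm Q\}$. In either case, rationality of the parameter space, namely $\P^3$, is the conclusion.
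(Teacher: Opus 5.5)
Your proposal is correct and follows the paper's route: the paper states the corollary as an immediate consequence of Lemma~\ref{lem: 2244}(a),(b), reading the order-$4$ point as a halving of the marked point $(0,0)-\infty$, and your sign count against the $J[2]$-torsor is the natural birationality check that the paper leaves implicit. Your closing worry about a $2$-to-$1$ map goes away because the overall sign in $\P^3_{a,b,c,d}$ and the inversion $Q\mapsto -Q$ are the same $-1$: the rescaling $x\mapsto\mu^2x$, $y\mapsto\mu^5y$ with $\mu=-1$ is the hyperelliptic involution, which fixes every marked Weierstrass point and negates $s_1$ and $s_3$, hence sends $D_0$ to $-D_0$; therefore $(X,Q)\cong(X,-Q)$ as marked objects, and the $8$ projective sign classes correspond bijectively to isomorphism classes of halvings.
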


\section{Genus-two Jacobians with \texorpdfstring{$\grp{2,2,2,12}$}{[2,2,2,12]}-torsion}
\label{sec:symmetric}

We combine some of the results from the previous sections to find a model for the universal genus-two curve with $6$ Weierstrass points $W_1,\ldots, W_6$ and a class $[D - K] \in J(k)$ of order $12$ such that $2[D-K] = [P - W_1]$ for some $P \in X(k)$.  

Let \(\mathcal S\subset\mathbf P^4\) be the complete intersection
\begin{equation}
\begin{split}
 a^2+b^2+c^2&=u^2+v^2,\\
 a^4+b^4+c^4&=u^4+v^4.
\end{split}
\label{eq:surface}
\end{equation}
Let \(\mathcal S^\circ \subset \mathcal{S}\) be the open subset where all five coordinates are
nonzero and their squares distinct.  We associate with
\(P=[a:b:c:u:v]\in\mathcal S^\circ(\Q)\) the curve 
\begin{equation}
 X_P:\quad
 y^2=x(x-a^2)(x-b^2)(x-c^2)(x-u^2)(x-v^2).
\label{eq:symmetric-curve}
\end{equation}
The open condition is exactly the nonvanishing of the discriminant, so
\(X_P\) is a smooth curve of genus two.

\begin{theorem}
\label{thm:symmetric [2,2,2,12]}
For every \(P\in\mathcal S^\circ(\Q)\), we have $\grp{2,2,2,12} \mathrel{\scalebox{1.7}[1]{$\hookrightarrow$}}
 \Jac(X_P)(\Q).$
\end{theorem}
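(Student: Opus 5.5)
The plan is to exhibit an explicit point of order $6$ using Theorem~\ref{thm: M(6)}, and then show by $2$-descent that this point is divisible by $2$ in $J(\Q)$, where $J=\Jac(X_P)$. Combined with the full rational $2$-torsion, this gives $\grp{2,2,2,12}$.

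\emph{Step 1 (full $2$-torsion).} The sextic $f(x)=x(x-a^2)(x-b^2)(x-c^2)(x-u^2)(x-v^2)$ splits completely over $\Q$. By the discussion of points of order $2$, this gives $J[2](\Q)\simeq\grp{2,2,2,2}$.

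\emph{Step 2 (order $6$).} Write $A=a^2,B=b^2,C=c^2,U=u^2,V=v^2$. I will show that
\[
(x-A)(x-B)(x-C)-x(x-U)(x-V)
\]
is a constant. The $x^2$-coefficient vanishes because $A+B+C=U+V$. The $x$-coefficient vanishes because
\[
AB+AC+BC=\tfrac12\bigl((A+B+C)^2-(A^2+B^2+C^2)\bigr)=\tfrac12\bigl((U+V)^2-(U^2+V^2)\bigr)=UV,
\]
using both equations of \eqref{eq:surface}. So $(x-A)(x-B)(x-C)=x(x-U)(x-V)-ABC$. Put $h=x(x-U)(x-V)-ABC/2$ and $h_3=-ABC/2$. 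Then $f=(h-h_3)(h+h_3)=h^2-h_3^2$, which is exactly the model of Theorem~\ref{thm: M(6)}. That theorem then says $[\infty_+-(0,0)]$ has order $6$ in $J(\Q)$.

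\emph{Step 3 (divisibility by $2$).} To apply the descent map of Lemma~\ref{lem: division by 2 via descent}, I move the Weierstrass point $(0,0)$ to infinity. Substitute $x\mapsto -1/x$ and $y\mapsto y/x^3$, up to sign and scaling. Since $f$ has the factor $x$, the result is a quintic model
\[
y^2=\prod_{t\in\{a,b,c,u,v\}}(1+t^2x)=(abcuv)^2\prod_t\bigl(x+t^{-2}\bigr),
\]
whose leading coefficient is a square. Absorbing $(abcuv)^2$ into $y$, the Weierstrass point $(0,0)$ becomes the point at infinity, and $\infty_\pm$ become the two points $(0,\pm1)$. Both are non-Weierstrass because $abcuv\neq 0$.

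The $2$-descent map for a monic quintic with rational roots $r_i$ sends $[(x_0,y_0)-\infty]$ to $(x_0-r_i)_i$ modulo squares. Lemma~\ref{lem: division by 2 via descent} is the normalized form of this, obtained after translating one root to $0$, which changes nothing here. For $x_0=0$ and $r_i=-t^{-2}$, every coordinate is $t^{-2}\in\Q^{\times2}$. Hence $\delta=1$, and since $\delta$ is injective on $J(\Q)/2J(\Q)$, the class $[\infty_+-(0,0)]$ lies in $2J(\Q)$.

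Choose $D$ with $2D=[\infty_+-(0,0)]$. Then $D$ has order $12$. The subgroup generated by $D$ and $J[2](\Q)\simeq\grp{2,2,2,2}$ meets $\langle D\rangle$ in $\langle 6D\rangle\simeq\Z/2$, so it is isomorphic to $\grp{2,2,2,12}$.

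The main obstacle is Step 3. One has to track the change of coordinates carefully so that the leading coefficient really is a square and the images of $\infty_\pm$ are correctly identified. One also has to justify applying the descent formula in the translated normalization of Lemma~\ref{lem: division by 2 via descent}, checking that translating a root to $0$ multiplies every coordinate by the same factor up to squares. The hypothesis $P\in\mathcal S^\circ$ is used to guarantee that the curve is smooth and that $x_0=0$ is not a root.
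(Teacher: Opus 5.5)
Your proof is correct, but it reaches the $12$-torsion by a different route than the paper.

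The paper also gets order $6$ from Theorem~\ref{thm: M(6)} via the identity $(x-a^2)(x-b^2)(x-c^2)=x(x-u^2)(x-v^2)-a^2b^2c^2$. It then produces the $4$-torsion separately. It applies the M\"obius map $\tilde x=(x-u^2)/(v^2-x)$, sending $W_{u^2}\mapsto 0$ and $W_{v^2}\mapsto\infty$. It then uses Lemma~\ref{lem: 2244}(b) to show $[W_{u^2}-W_{v^2}]\in 2J(\Q)$. This requires checking that $(u^2-e)(v^2-e)$ for $e\in\{0,a^2,b^2,c^2\}$, and a new leading coefficient, are all squares; each check uses the surface equations a second time. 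The group is then assembled as $\grp{2,2,2,4}\times\grp{3}$.

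You instead halve the order-$6$ class itself by sending $(0,0)$ to infinity. There the descent image is $(t^{-2})_t$ with no computation, so the surface equations enter only through the order-$6$ condition.

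The two routes are equivalent. The function $y\pm x(x-u^2)(x-v^2)$ has divisor $(0,0)+W_{u^2}+W_{v^2}-3\infty_\pm$, labelling $\infty_\pm$ by $y/x^3\to\pm1$. Hence $3[\infty_\pm-(0,0)]=[W_{u^2}-W_{v^2}]$, and halving one class is equivalent to halving the other, since the $3$-primary part is always divisible by $2$.

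Your route is shorter and shows that $[\infty_\pm-(0,0)]\in 2J(\Q)$ for every smooth curve of the shape $y^2=x\prod_i(x-t_i^2)$. It also directly realizes the moduli description at the start of Section~\ref{sec:symmetric}: an order-$12$ class whose double is $[P-W_1]$. The paper's route has the advantage that Lemma~\ref{lem: 2244}(b) supplies an explicit Mumford representation of the $4$-torsion point.

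One small slip in your last paragraph: it is $J[2](\Q)$, not the subgroup it generates together with $D$, that meets $\langle D\rangle$ in $\langle 6D\rangle$.
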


\begin{proof}
Putting
\[
 p=a^2,\quad q=b^2,\quad r=c^2,\quad s=u^2,\quad t=v^2,
\]
the equations for \(\mathcal S\) give
\[
 p+q+r=s+t,\qquad pq+pr+qr=st.
\]
Consequently,
\begin{equation}
 (x-p)(x-q)(x-r)=x(x-s)(x-t)-pqr.
\label{eq:constant-difference}
\end{equation}
Setting $h = x^3 -(s+t)x^2 +stx -\frac12 pqr$, the curve $X_P$ is $y^2 = h(x)^2 - (\frac12pqr)^2$. 
By Theorem \ref{thm: M(6)}, the classes $[\infty_{\pm} - (0,0)]$ have order six.   

All six Weierstrass points of $X_P$ are rational, hence
\(J[2](\Q)\simeq(\Z/2)^4\).  It remains to check that $J$ has a point of order $4$ that doubles to $W_s-W_t$, where $W_i$ is the Weierstrass point above $x = i$. For this, we 
make the change of coordinates
\[
 \tilde x=\frac{x-s}{t-x}
\]
sending \(W_s\) to \(0\) and \(W_t\) to infinity. To apply Lemma \ref{lem: 2244}(b), we check that for each remaining branch point
\(e\in\{0,p,q,r\}\), the expression \((s-e)(t-e)\) is a square, and that the new leading coefficient 
\[t(r-t) (q - t) (p - t)\]
is a square as well. These statements all follow immediately from
Eq.\ \ref{eq:constant-difference}.
\end{proof}

Searching among rational points of low height on $\mathcal{S}^\circ$, we find:

\begin{theorem}\label{thm: three points on the surface}
    The three points 
 \begin{align*} 
  & P_1 = [120 : 143 : 266 : 218 : 241],\\
 & P_2 = [ 143 : 408 : 1015 : 437 : 1013], \quad\;\textrm{and}\\
 & P_3 = [16660 : 78793 : 21456 : 78644 : 27593] 
 \end{align*}
 in $\mathcal{S}^\circ(\Q)$ give pairwise
nonisomorphic curves with $\grp{2,2,2,12}\simeq
 \Jac(X_P)(\Q)_{\mathrm{tors}}$ and $J$ geometrically simple.
\end{theorem}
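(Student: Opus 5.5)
First check that each $P_i$ actually lies in $\mathcal S^\circ(\Q)$. This means verifying the two defining identities of \eqref{eq:surface} in exact integer arithmetic. For $P_1$, for example, $120^2+143^2+266^2 = 14400+20449+70756 = 105605$, and $218^2+241^2 = 47524+58081 = 105605$; the quartic identity is checked the same way. We also check that the five coordinates are nonzero with pairwise distinct squares. Theorem~\ref{thm:symmetric [2,2,2,12]} then gives $\grp{2,2,2,12}\hookrightarrow \Jac(X_{P_i})(\Q)$. What remains is three things: an upper bound showing the torsion is no larger, geometric simplicity, and pairwise non-isomorphism.

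\textbf{Upper bound.} Use reduction modulo good primes. For a prime $\ell$ of good reduction for $X_{P_i}$, the prime-to-$\ell$ torsion of $J(\Q)$ injects into $J(\mathbb F_\ell)$. Its order divides $\#J(\mathbb F_\ell)=L_\ell(1)$, which we compute by point counts on $X$ over $\mathbb F_\ell$ and $\mathbb F_{\ell^2}$.

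\begin{itemize}
\item Choose two or three odd good primes $\ell_1,\ell_2,\ldots$ not dividing $6$ (avoiding the primes dividing the discriminant, i.e.\ the differences of the squared coordinates). Compute the group structures $J(\mathbb F_{\ell_j})$ in Magma.
\item Take gcds of the group orders. This should bound the torsion order by $96$, up to possible $\ell_j$-primary contributions, which are handled by using another prime.
\item If the orders alone leave an extra factor, for instance a possible $\grp{2,2,4,12}$ or $\grp{4,\ldots}$, compare the group structures: the $2$-rank and $4$-rank of $J(\mathbb F_\ell)[2^\infty]$ constrain the rational subgroup. Alternatively, use Lemma~\ref{lem: descent formula for two-torsion points} to show that a generator of $J[2](\Q)$ other than the known halvable ones is not in $2J(\Q)$, by exhibiting a non-square coordinate of $\delta$.
\end{itemize}

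\textbf{Geometric simplicity.} Reduce modulo a good prime $\ell$ and compute the characteristic polynomial of Frobenius $L_\ell(T)$. Then check that $J_{\overline{\mathbb F}_\ell}$ is simple and not supersingular-split: $L_\ell$ should be irreducible and the polynomials of the powers $\mathrm{Frob}^n$ should remain irreducible for all relevant $n$. Equivalently, the Frobenius field should be a quartic CM field that is not Galois-degenerate, i.e.\ no power of Frobenius lies in a proper subfield. If $J_{\Qbar}$ were isogenous to $E^2$ or $E_1\times E_2$, every reduction would be geometrically split, so one good prime with an ordinary, absolutely simple reduction suffices. A standard criterion is that the reduction is ordinary, the middle coefficient of $L_\ell$ is prime to $\ell$, and $L_\ell(T)$ stays irreducible after substituting any root-of-unity twist. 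Alternatively, one can invoke Magma's geometric endomorphism ring computation to certify $\End(J_{\Qbar})=\Z$.

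\textbf{Non-isomorphism.} Compute Igusa--Clebsch invariants of the three curves and check that the points they define in weighted projective space $\P(2,4,6,10)$ are distinct. One could also just compare conductors or $L_\ell$ at a single prime.

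The main obstacle I expect is the torsion upper bound. The $2$-part $\grp{2,2,2,4}$ must be shown not to be larger, and reductions tend to have extra $2$-power structure because all Weierstrass points are rational. I would first try several primes and take the intersection of possible subgroup structures. Failing that, I would use explicit $2$-descent via Lemmas~\ref{lem: division by 2 via descent} and~\ref{lem: descent formula for two-torsion points} to rule out further divisibility by $2$ of the specific $4$-torsion generators.
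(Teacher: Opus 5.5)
Your plan is correct and, like the paper's proof, it is ultimately a computer certification. The decomposition differs, though. The paper reports that Magma's torsion-subgroup routine (Stoll's algorithm with the M\"uller--Stoll height bounds) returns exactly $\grp{2,2,2,12}$, and it certifies simplicity with Lombardo's algorithm. You instead bound the torsion by reduction and fall back on explicit $2$-descent, certify simplicity at a single good prime, and separate the curves by Igusa--Clebsch invariants (a check the paper leaves implicit).

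Your single-prime simplicity argument is rigorous, since geometric splitting survives good reduction, and it is more elementary than Lombardo's method. It could never succeed for a surface with QM, but that is not an issue here.

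The weak point of your route is that the upper bound is not guaranteed to close. $\#J(\mathbb F_\ell)$ is an isogeny invariant, and all fifteen Richelot isogenies of $J$ are defined over $\Q$. So if some $\Q$-isogenous surface had more than $96$ rational torsion points, the gcd would exceed $96$ for every choice of primes. Comparing group structures can likewise fail in principle: every Frobenius may fix a subgroup strictly containing $\grp{2,2,2,4}$ with no common rational one. The paper's algorithm avoids this issue entirely.

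Your $2$-descent fallback is therefore genuinely needed in general, and it should be stated precisely, after moving a rational Weierstrass point to infinity so the lemmas apply.
\begin{enumerate}
\item $\delta$ restricted to $J(\Q)[2]\simeq(\Z/2\Z)^4$ must have rank exactly $3$, so that its kernel is $\langle [W_s-W_t]\rangle$. Checking only ``generators'' is not enough.
\item $\delta(D_0)\notin\delta(J(\Q)[2])$, which excludes a point of order $8$ among the sixteen lifts $D_0+J[2]$.
\end{enumerate}
Together these force the $2$-part to be $\grp{2,2,2,4}$. For the odd part you have no fallback, though in practice primes whose point counts have odd part exactly $3$ (or coprime extra odd factors) should exist.

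One small slip: since $0$ is a branch point, primes dividing the coordinates themselves are also bad, not only primes dividing differences of the squared coordinates. Also, for odd primes of good reduction the whole torsion subgroup injects into $J(\mathbb F_\ell)$, not just the prime-to-$\ell$ part.
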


\begin{proof}
Magma verifies that the inclusion from Theorem \ref{thm:symmetric [2,2,2,12]} is in fact an equality in these examples. We check geometric simplicity using \cite{Lombardo2019}. 
\end{proof}

For instance, the point $P_1$ in Theorem~\ref{thm: three points on the surface}  gives the curve $$y^2 = x(x-120^2)(x-143^2)(x-266^2)(x-218^2)(x-241^2).$$  These curves appear to be the first known examples of genus-two curves with $J(\Q)_{\mathrm{tors}} \simeq [2,2,2,12]$, whether $J$ is geometrically simple or not. 

\subsection{Geometry of \texorpdfstring{$\mathcal{S}$}{\it S}}
The surface $\mathcal{S}$ has exactly 36 singular points, where the genus-two curve is maximally degenerate. They are all rational ordinary double points with coordinates in
\(\{0,1,-1\}\).  The order $192$ group 
\[
 G:= \{\pm 1\}^5/\langle-1\rangle\rtimes(S_3\times S_2)
\]
acts in the obvious way on $\mathcal{S}^\circ$, and there are 12 points in the $G$-orbit of
$[1:0:0:1:0]$ and 24 in the $G$-orbit of $[1:1:0:1:1]$.  Because
\(\mathcal S\) is a normal nodal \((2,4)\)-complete intersection, it follows from the adjunction formula that its dualizing sheaf is $\O_\mathcal{S}(1)$. In particular, $\mathcal{S}$ is of general type.

\subsection{Infinitude of \texorpdfstring{$\mathcal{S}^\circ(\Q)$}{S\textdegree(Q)}}
In the Mazur--Ogg classification of rational torsion points on elliptic curves, each group that arises does so infinitely often. Does the group $[2,2,2,12]$ appear infinitely often in the Mordell--Weil group of genus-two Jacobians over $\Q$? 

Since $\mathcal{S}$ is of general type, the Bombieri--Lang conjecture predicts that its rational points are supported on finitely many curves and points. Thus, to find infinitely many rational points, we should look for genus $0$ or $1$ curves  lying on $\mathcal{S}^\circ$. In fact, it turns out that the point $P_1 \in \mathcal{S}^\circ(\Q)$ lies on a genus $1$ curve.  We deduce this from work of Choudhry \cite{Choudhry}, where $P_1$ appears implicitly as a solution to a related Diophantine equation. 

\begin{theorem}\label{thm:g2_96_inf}
   The open surface $\mathcal{S}^\circ$ has infinitely many rational points. In particular, there are infinitely many genus-two Jacobians over $\Q$ with $J(\Q)_{\mathrm{tors}}$ containing $[2,2,2,12]$. 
\end{theorem}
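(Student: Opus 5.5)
We need a genus-one curve on $\mathcal S$ through $P_1$ with positive rank, avoiding the finitely many points outside $\mathcal S^\circ$. The second statement then follows from Theorem \ref{thm:symmetric [2,2,2,12]}. Infinitely many points of $\mathcal S^\circ(\Q)$ give infinitely many non-isomorphic curves $X_P$, because each isomorphism class of genus-two curves arises from only finitely many $P$: the Igusa invariants give a nonconstant map from the curve to $\mathcal M_2$, so its fibres on the curve are finite.

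To find the curve, use the quadratic substitution $p=a^2,\dots,t=v^2$ from the proof of Theorem \ref{thm:symmetric [2,2,2,12]}. The conditions become $p+q+r=s+t$ and $pq+pr+qr=st$, i.e.\ equation (\ref{eq:constant-difference}). I would look for a one-parameter linear or conic family in these variables that Choudhry's construction suggests. The natural choice is to impose one extra linear relation satisfied by $P_1$ that is compatible with a symmetry in $G$. For $P_1$, observe $120^2+143^2+266^2 = 218^2+241^2$ and $266-241 = 25$, $218-120=98$; I would search for a linear relation among $a,b,c,u,v$ satisfied by $P_1$ and check whether it cuts $\mathcal S$ in a curve with a genus-one component. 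A hyperplane section of a $(2,4)$ complete intersection has degree $8$ and is generally of higher genus, so a more clever choice is needed.

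A more reliable choice follows Choudhry directly. Parametrize the quadric $a^2+b^2+c^2=u^2+v^2$ rationally; it is a rational threefold, with a rational point $P_1$. Then restrict to a line or conic in the parameter space through the image of $P_1$. The quartic condition then becomes a single equation in one parameter together with a square condition, giving a quartic model $w^2=F(m)$.

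Concretely, I would set $u=a+\lambda$ and $v=b+\mu$ style substitutions, or use Choudhry's parametrization, until the quartic relation reduces to $w^2=$ a quartic in one variable with a known rational point coming from $P_1$. That point makes the curve an elliptic curve $E$. Compute its Weierstrass model and check, by a Nagell--Lutz argument or by Mazur bounding torsion, that the point from $P_1$ has infinite order. For example, show that some multiple has non-integral coordinates, or that its reduction orders at two good primes are incompatible with finite order. Then $E(\Q)$ is infinite.

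Finally, the complement $E\setminus\mathcal S^\circ$ is cut out by $\prod x_i\prod_{i<j}(x_i^2-x_j^2)=0$. This is finite unless $E$ lies inside that locus, which is excluded because $P_1\in\mathcal S^\circ$. So infinitely many points of $E(\Q)$ lie in $\mathcal S^\circ(\Q)$.

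The main obstacle is finding the genus-one curve explicitly, which is why extracting it from Choudhry is key. I would first try the fibration of $\mathcal S$ by $u\pm v$ and $c$-dependent pencils. Verifying infinite order is routine once a Weierstrass model is in hand.
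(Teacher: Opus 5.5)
Your proposal has a genuine gap at the step you yourself flag as the main obstacle. It never exhibits a curve of genus at most one on $\mathcal S^\circ$ with infinitely many rational points, and the fallback route you sketch cannot produce one. A line or conic in the parameter space $\P^3$ of the quadric threefold $a^2+b^2+c^2=u^2+v^2$ corresponds to a curve on that threefold. The quartic equation then cuts out, in general, only finitely many points on it, so this gives no one-parameter model $w^2=F(m)$. A plane would give a curve, but with no control over its genus. Meanwhile you dismiss hyperplane sections because a general one has genus $9$. A special hyperplane section can be reducible, however, and that is exactly how the paper's proof works.

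You were in fact one observation away. You computed $218-120=98$, and also $241-143=98$, so $P_1$ lies on the hyperplane $H\colon u-v=a-b$. On $H$ the quadric reads $(a-b)^2+2uv=a^2+b^2+c^2$, i.e.\ $uv=ab+c^2/2$, equivalently $c^2=2(v+a)(v-b)$. Modulo the quadric, the quartic equation is equivalent to $u^2v^2=a^2b^2+a^2c^2+b^2c^2$, and on $H$ this becomes $c^2\bigl(4(a^2-ab+b^2)-c^2\bigr)=0$. The component $c=0$ lies in $\mathcal S\setminus\mathcal S^\circ$. The other component is the intersection of two quadrics $c^2=4(a^2-ab+b^2)=2(v+a)(v-b)$ in $H\simeq\P^3$, a genus-one curve through $P_1$. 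The paper identifies it with the rank-one curve $E\colon y^2=x^3-21x-20$ via Choudhry's map $\psi$, with generator $Q=(-3,4)$ and $\psi(2Q)=P_1$. Your closing argument then finishes the proof: the curve is not contained in the degenerate locus, so it meets that locus in only finitely many points.

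Your extra step, that infinitely many $P$ give infinitely many non-isomorphic $X_P$, is a sensible addition that the paper leaves implicit. However, the claimed nonconstancy of the Igusa map needs a reason. For example: $(X_P,(0,0),\infty_+)$ determines the model up to $x\mapsto\lambda x$, and a genus-two curve has only finitely many pairs $(W,P)$ with $[P-W]$ torsion.
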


\begin{proof}
    The idea is to find a hyperplane $H \subset \P^4$  such that $a^4 + b^4 + c^4 - u^4 -v^4$ factors into two quadrics modulo $a^2 + b^2 + c^2 - u^2 - v^2$ when restricted to $H$, hence $H \cap \mathcal{S}$ is a union of two $(2,2)$-curves (intersection of two quadrics) in $\P^3$. One checks that $H \, \colon \, u-v -a +b = 0$ has this property.  One of the two quadrics is singular and the corresponding component of $H \cap \mathcal{S}$ lands in the degenerate locus $\mathcal{S} \setminus \mathcal{S}^\circ$. However, the other quadric is smooth and corresponds to a non-degenerate component of $H \cap \mathcal{S}$, giving a genus $1$ curve in $\mathcal{S}^\circ$. Explicitly,  and following \cite[\S 3]{Choudhry}, we consider the elliptic curve $E \, \colon \,  y^2 = x^3  -21x -20$ over $\Q$ of rank $1$ and conductor $288 = 2^5 3^2$. Letting
    \begin{align*}
        w &= \dfrac{3x + y - 6}{y + 18}\\
        t &= \dfrac{- (4x^2 - 7x + 12y + 16)}{(x - 8) (3x + 2y + 12)},
    \end{align*}
the map $\psi \, \colon \, E \to \mathcal{S}$ given by 
\[(x,y) \mapsto [3t(1-t) : 3t-1 : 2w(3t^2+1) : 3t^2 -3t + 2 : 6t^2 - 3t + 1 ]\]
generically lands in $\mathcal{S}^\circ$. The point $Q = (-3,4)$ is a generator of the free part of $E(\Q)$, and we compute that $\psi(2Q) = P_1$. Since the image of $\psi$ is not contained in $\mathcal{S} \setminus \mathcal{S}^\circ$, it follows that infinitely many of the points $\psi(nQ)$ must lie in $\mathcal{S}^\circ$. 
\end{proof}

The proof of Theorem \ref{thm:g2_96_inf} reveals some of the arithmetic of $\mathcal{S}^\circ$ by explaining the provenance of $P_1$ and its ambient geometry. Similarly, one can ask: does the point $P_2$ (resp.\ $P_3$) lie on an irreducible curve $C \subset \mathcal{S}^\circ$ with $\#C(\Q) = \infty$?

\subsection{How we found \texorpdfstring{$\mathcal{S}^\circ$}{S\textdegree}}
We started by solving for the conditions that the sextic $f = h^2 - h_3^2$ from  Theorem \ref{thm: M(6)} splits completely. Note that $f$ already factors as $f = f_1f_2f_3$ with $\deg(f_i) = i$. The family where $f_2$ splits and $f_3$ has a rational root is easy to solve for. Forcing the remaining quadratic to split amounts to parameterizing a quadric surface over $\Q$ by $\P^2_{m,n,s}$. 

Lemma \ref{lem: 2244} then gives conditions for a given $2$-torsion point $D = W_i + W_j$ to be divisible by $2$ (and hence the existence of a point of order $4$). There are $15$ choices of $D$, each one leading to four explicit expressions $S_1,S_2,S_3,S_4$ in $m,n,s$ that are required to be squares. So each $D$ gives an explicit $(\Z/2\Z)^4$-cover $Z \to \P^2_{m,n,s}$ defined by $x_i^2 = S_i(m,n,s)$ for $1 \leq i \leq 4$.  The idea is to rationally parameterize one of the degree $4$ intermediate covers $Z_\alpha$  and then search for rational points on the resulting model of $Z$ as a biquadratic extension of $Z_\alpha$. There are 15 choices of $D$ and 35 choices of intermediate covers $Z\to Z_\alpha \to \P^2_{m,n,s}$. So there are 525 surfaces to search on. 
In theory, the choice of $D$ should not matter since they all ultimately give isomorphic $(\Z/2\Z)^4$-covers, but in practice, some choices lead to more tractable intermediate surfaces $Z_\alpha$. 

We search for rational points via the corresponding intermediate surfaces. Eventually we identify one of the two-torsion points $D$ as especially promising and find a low height solution on $Z$. We then focus on the polynomials $S_1,\ldots, S_4$ corresponding to $D$.  The condition $S_1 = x_1^2$ gives an equation of a cubic surface, which we parameterize in Magma. The subsequent equation $S_2 = x_2^2$ is a quadric surface, again parameterizable by a rational surface $\P^2_{A,B,C}$. The remaining two conditions define a surface $\mathcal{Z}$ with model
\begin{align*}
    x_3^2 &= (B^2 - C^2)(B^2-A^2)(B^2 - A^2 - C^2)\\
    x_4^2 &= (B^2 - A^2 - C^2)(B^2A^2 + B^2C^2 - A^4 -A^2C^2 - C^4),
\end{align*}
a fiber product of two degree-two K3 surfaces. A search finds two more low height points on this surface, and we eventually observe that $\mathcal{Z}$ is birational to the more symmetric surface $\mathcal{S}$ via 
\[ [A : B : C : x_3 : x_4]  =   \left[a : u : b : \frac{a^2 b^2 c}{u^2 - c^2} : v (c^2-v^2)\right].\]

\section{Other torsion groups and families}
\label{sec:other examples}

We briefly describe several other examples of genus-two Jacobians over $\Q$ with rational torsion subgroups not observed before (to our knowledge). Some of these groups were found by studying the corresponding moduli space using Lemma \ref{lem: 2244}. For the others, the strategy is similar to the previous section.  Namely, we begin working out the algebra and geometry of a moduli space of Jacobians containing that subgroup (either the full moduli space or a special locus), until either the geometry gets complicated or there are too many pathways to consider.  At each step we are trying to divide by two, three, or five using the basic arithmetic of genus-two curves (as in the proof of Theorem \ref{thm: M(6)}).  We then try to evaluate which paths seem most promising and to implement a ``smart'' brute force search, keeping track of congruence conditions, factorizations, symmetries, etc. After identifying a potentially promising pathway, we iterate this process until (hopefully) we find rational points in the search step. 

It is conjectured that $\#J(\Q)_{\mathrm{tors}}$ is uniformly bounded among genus-two Jacobians over $\Q$, so this process of finding new torsion subgroups should stop at some point. We are presumably not yet at the ``stopping point'', so we hope this paper encourages others to carry out further explorations. Even for the groups that are known to arise, there are many questions that are still open, especially the question of whether the group is realized infinitely often. See Section \ref{sec: open questions} for a collection of these questions.

One special locus of curves that proves fruitful for these searches is the locus where one of the torsion points $D -K$ is of the form $2P - K \equiv P - \iota(P)$ for some point $P$ (or the related condition that $D - K \equiv P-W$ \cite{BoxallGrant2000}). We say that such points $D-K$ {\it lie on the curve} as they are in the image of the canonical Abel-Jacobi map  $\rho \;\colon\; X \to J$ given by $\rho(P) = 2P - K$. Note that $\rho$ is not an embedding but it is injective away from the Weierstrass points. 

\subsection{A second order-80 group}

The curve
\begin{align}
 y^2+(x^2+x)y={}&-391671x^6+1894851x^5+6846924x^4\notag\\
 &-15133525x^3+3904068x^2+2625336x+254016
\label{eq:2220}
\end{align}
has  \( J(\Q)_{\mathrm{tors}} \simeq \grp{2,2,20}\) and
\(\End(J_{\Qbar})=\Z\).  This example was found in the one-parameter family
\[
y^2 = x(x-2)(x^4 + (4t + 2)x^3 + 8t(t+1)x^2 + 8t(t+1)^2x + 4(t+1)^4)
\]
of genus-two curves with both an order $5$ class $[\infty_+ - (0,0)]$ and an order $20$ class $[(-t-1,(t+1)^2(t+3)) - (0,0)]$ lying on the curve.  
The condition that the quartic factor has a root is 
\[
 t=-\frac{z^4+4z+4}{z^4+4z^3+8z^2+8z+4}.
\]
The condition that the remaining cubic has a root defines a degree-three cover $\tilde Y \to \P^1_z$ which turns out to be a singular genus-two curve. Its normalization is \href{https://www.lmfdb.org/Genus2Curve/Q/2528/a/20224/1}{the curve}
\begin{equation}\label{eq: genus two moduli space}
Y \colon y^2 +(x^2 + 1)y = x^5 - x
\end{equation}
of conductor 2528, which has a pair of rational points of large height (corresponding to $z = -1/7$ and $z = -7/9$), both corresponding to Eq. \ref{eq:2220}.   For these values we must have $[2,2,20] \hookrightarrow J(\Q)_{\mathrm{tors}}$, and we compute equality. The Jacobian of $Y$ has rank $1$, and by applying Magma's implementation of Chabauty's method at the prime $3$, we  find that these are provably the only rational points on $Y$ corresponding to non-singular genus-two curves. It follows that Eq.\ \ref{eq:2220} is the unique curve with $\grp{2,2,20}$ in this natural one-parameter family. 

As mentioned in the introduction, Elkies recently found a two-parameter family with
\(\grp{2,2,2,10}\)-torsion, parameterized 
by the Clebsch cubic surface
\(\sum r_i=\sum r_i^3=0\); see 
\cite{Elkies2024}.  The 1-parameter family above is one of several variants of Elkies' 5-torsion construction that we explored. By Elkies' result, there are infinitely many $J$ with $J(\Q)_{\mathrm{tors}}$ of order $80$, but we do not know if there are infinitely many with group $[2,2,20]$. 

\begin{remark}
    We can at least say that there are infinitely many genus-two curves over {\it quadratic} extensions $K/\Q$ such that $J(K)_{\mathrm{tors}}$ contains $[2,2,20]$. This follows from the fact that $Y$ in Eq.\ \ref{eq: genus two moduli space}  is hyperelliptic and hence has infinitely many quadratic points. In fact, since $\Jac(Y)$ has rank $1$, there are two different sources of infinitely many quadratic points on $Y$.  
\end{remark}

\subsection{\texorpdfstring{$\grp{2,2,4,4}$}{[2,2,4,4]}-torsion}

By Lemma \ref{lem: 2244}, the moduli space $A(2,2,2,4)$ of genus-two curves with $\grp{2,2,2,4}$-torsion is birational to $\P^3_{a,b,c,d}$ and the universal genus-two curve above it is
\begin{equation}
 y^2=x(x+a^2)(x+b^2)(x+c^2)(x+d^2).
\label{eq:square-branch}
\end{equation}
Lemma \ref{lem: 2244} also gives the conditions for having a second independent class of order $4$, namely:
\begin{equation}
\begin{split}
 y_1^2&=(a^2-c^2)(a^2-d^2),\\
 y_2^2&=(b^2-c^2)(b^2-d^2),\\
 w^2&=(a^2-c^2)(b^2-c^2).
\end{split}
\label{eq:2244-threefold}
\end{equation}
These three equations define a $(\Z/2\Z)^3$-cover of $\P^3_{a,b,c,d}$. Let $\mathcal{V} \, \colon \,  y^2 = (x^2 -1)(x^2 - t^2)$ be the elliptic surface over $\P^1_t$ with identity section $(1,0)$. This has Weierstrass model  \[
\mathcal{E} \colon y^2 = x(x+1)(x+u^2),
\]
where $u = (t+1)/(t-1)$. More precisely:

\begin{lemma}\label{lem: model isom}
 Let $u = (t+1)/(t-1)$. We have $\E_u \simeq \mathcal{V}_t$ via the map 
    \[(x_0,y_0) \mapsto \left( \frac{x_0-u}{x_0+u}, \frac{2(t+1)y_0}{(x_0 +u)^2}\right).\]
\end{lemma}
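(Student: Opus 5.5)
The plan is a direct verification: first check that the given map carries the curve $\E_u$ into $\mathcal{V}_t$, then check that it is invertible and matches the chosen base points. The only input needed is the relation between $t$ and $u$. Inverting $u=(t+1)/(t-1)$ gives $t=(u+1)/(u-1)$, so
\[
t+1=\frac{2u}{u-1}.
\]
Put $X=(x_0-u)/(x_0+u)$ and $Y=2(t+1)y_0/(x_0+u)^2$. Assuming $y_0^2=x_0(x_0+1)(x_0+u^2)$, the goal is to show $Y^2=(X^2-1)(X^2-t^2)$.

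The computation splits into the two quadratic factors, each treated as a difference of squares.

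\begin{itemize}
\item For the first factor,
\[
X^2-1=\frac{(x_0-u)^2-(x_0+u)^2}{(x_0+u)^2}=\frac{-4ux_0}{(x_0+u)^2}.
\]
\item For the second factor, clear the denominator $(u-1)^2$ coming from $t$ and factor:
\[
(u-1)^2(x_0-u)^2-(u+1)^2(x_0+u)^2=\bigl[(u-1)(x_0-u)-(u+1)(x_0+u)\bigr]\bigl[(u-1)(x_0-u)+(u+1)(x_0+u)\bigr].
\]
I expect this to simplify to
\[
(-2x_0-2u^2)(2ux_0+2u)=-4u(x_0+1)(x_0+u^2).
\]
Hence
\[
X^2-t^2=\frac{-4u(x_0+1)(x_0+u^2)}{(u-1)^2(x_0+u)^2}.
\]
\end{itemize}

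Multiplying the two factors gives
\[
(X^2-1)(X^2-t^2)=\frac{16u^2\,x_0(x_0+1)(x_0+u^2)}{(u-1)^2(x_0+u)^4}=\frac{16u^2\,y_0^2}{(u-1)^2(x_0+u)^4}.
\]
On the other hand, $Y^2=4(t+1)^2y_0^2/(x_0+u)^4$. These agree because $(t+1)^2=4u^2/(u-1)^2$. So the map sends $\E_u$ into $\mathcal{V}_t$.

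To see that the map is an isomorphism, note that $x_0\mapsto X$ is a M\"obius transformation, which is invertible whenever $u\neq 0$. Given $X$, the coordinate $y_0$ is then recovered linearly from $Y$. So the map is birational, and since both curves are smooth genus one it extends to an isomorphism of curves.

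For the base points, the point at infinity of $\E_u$ goes to $X=1$, $Y=0$, since $Y\sim 2(t+1)y_0/x_0^2\to 0$. This is the chosen identity section $(1,0)$ of $\mathcal{V}$, so the isomorphism is one of elliptic curves.

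The main obstacle is only bookkeeping: factoring the second difference of squares correctly, and excluding the degenerate fibres $u\in\{0,\pm1\}$ (equivalently $t\in\{\pm1,\infty\}$), where the curves are singular.
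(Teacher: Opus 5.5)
Your proposal is correct and is essentially the direct verification the paper has in mind (the lemma is stated there without proof). The two difference-of-squares factorizations, the identity $(t+1)^2=4u^2/(u-1)^2$, the invertibility of the M\"obius map for $u\neq 0$, and $O\mapsto(1,0)$ all check out.

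The only slip is in your closing parenthetical. There $u=-1$ corresponds to $t=0$, not $t=1$, and $t=1$ corresponds to $u=\infty$. So the excluded fibres are $u\in\{0,\pm1,\infty\}$, equivalently $t\in\{0,\pm1,\infty\}$, matching the base $\P^1_u\setminus\{0,1,-1,\infty\}$ used in the paper.
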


Taking $t = d/c$, the first two equations in Eq.\ \ref{eq:2244-threefold} describe the fiber square $\E \times_{\P^1} \E$ of this elliptic surface  over $\P^1_u$. (Note that $\E$ is the universal family over the modular curve \href{https://beta.lmfdb.org/ModularCurve/Q/4.24.0-4.b.1.3/}{$Y_1(2,4)$} by the elliptic curve analogue of Lemma \ref{lem: 2244}.)
The third equation determines a double cover $\mathcal A \to \mathcal E \times_{\P^1_u} \mathcal E$, which is related to the dual of the $2$-isogeny $\mathcal E \to \mathcal E'$ with kernel $(0,0)$ and codomain $\E' \colon y^2 = x(x-1)(x-t^2)$.  A precise description of  $\mathcal A$, which one can think of as the moduli space $A(2,2,4,4)$ of genus-two curves with $(2,2,4,4)$-torsion, is as follows.

\begin{proposition}\label{prop: 2,2,4,4 fibration of surfaces}
Let $\varphi \, \colon \, \E \to \E'$ and $\widehat{\varphi} \, \colon \, \E' \to \E$ be the above $2$-isogenies of elliptic curves over $Y = \P^1_u \setminus \{0,1,-1,\infty\}$. 
\begin{enumerate}
    \item $A(2,2,4,4)$ is isomorphic to the open subset of triples $(u, P,Q) \in \E \times_Y \E'$ such that 
    \begin{enumerate}
    \item $P$ and  $P- \widehat{\varphi}(Q)$ are not in $\grp{2,4} \simeq \E(Y)$
    \item $Q$ and $Q - \varphi(P)$ are not in $\E'[2] \simeq \E'(Y)$.
    \end{enumerate}
    \item The double cover $A(2,2,4,4) \to \E \times_Y \E$ is $(P,Q) \mapsto (P, P - \widehat{\varphi}(Q))$.
\end{enumerate}  
\end{proposition}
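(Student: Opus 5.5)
We interpret the three equations in \eqref{eq:2244-threefold} as rational points on fibers of elliptic surfaces, and then read off the group structure. Set $t=d/c$ and dehomogenize with $c=1$, so the parameters are $(a,b,t)$ with $d=t$. The first equation becomes $y_1^2=(a^2-1)(a^2-t^2)$, which says $(a,y_1)\in\mathcal V_t$. The second says $(b,y_2)\in\mathcal V_t$. Via Lemma \ref{lem: model isom}, these give points $P,P'\in\E_u$. So $A(2,2,2,4)$ with the first two conditions imposed is birational to $\E\times_Y\E$, with the standard degeneracy conditions removed. We then identify the third condition $w^2=(a^2-1)(b^2-1)$ with a condition on the pair $(P,P')$, and show it is equivalent to $P'=P-\widehat\varphi(Q)$ for some $Q\in\E'$.

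\textbf{Step 1: the third condition via the $2$-isogeny descent map.} On $\mathcal V_t$, the function $x^2-1$ is, up to squares, the composite of the $\widehat\varphi$-descent homomorphism $\E(K)/\widehat\varphi(\E'(K))\hookrightarrow K^\times/K^{\times2}$ with the isomorphism of Lemma \ref{lem: model isom}. The natural candidate is the map $(x_0,y_0)\mapsto x_0$ on $\E$, whose kernel $(0,0)$ matches the dual of $\varphi$. Under $x=(x_0-u)/(x_0+u)$ we get
\[
x^2-1=\frac{-4ux_0}{(x_0+u)^2},
\]
so $x^2-1\equiv -u\,x_0$ modulo squares. Hence
\[
(a^2-1)(b^2-1)\equiv x_0(P)\,x_0(P')\pmod{K^{\times 2}},
\]
and the third equation says $P$ and $P'$ have the same image under the descent map. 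Equivalently, $P-P'\in\widehat\varphi(\E'(K))$. I would verify the descent map carefully, including its values at the $2$-torsion points, by the standard formula $(x,y)\mapsto x$ with the $x=0$ point sent to $u^2$ times a square. This is a routine check.

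\textbf{Step 2: building the double cover.} Given $(P,Q)\in\E\times_Y\E'$, set $P'=P-\widehat\varphi(Q)$. The map $(P,Q)\mapsto(P,P')$ lands in the locus cut out by the third equation. Its fibers are the $Q$ with a fixed $\widehat\varphi(Q)$, i.e.\ cosets of $\ker\widehat\varphi\simeq\Z/2$, which is a double cover. To identify $A(2,2,4,4)$ itself, we match the extra square root $w$ with the choice of $Q$ in its coset. The two preimages $Q$ and $Q+T'$, where $T'$ generates $\ker\widehat\varphi$, should correspond to $\pm w$. I would check this by writing $w$ explicitly as a rational function of $Q$ and $u$, for instance via the $\varphi$-descent applied to $Q$.

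\textbf{Step 3: the open conditions.} We must exclude parameters where $a^2,b^2,1,t^2$ fail to be distinct and nonzero, i.e.\ where the curve \eqref{eq:square-branch} degenerates. The conditions $a^2\ne1$ and $a^2\neq t^2$ mean $P$ is not a point with $y=0$ on $\mathcal V_t$. The points at infinity of $\mathcal V_t$ correspond to $a=\infty$. Together these make up the torsion $[2,4]\simeq\E(Y)$. The same holds for $P'$, and $b^2\ne t^2$, $b^2\ne1$, $a\ne\pm b$ are handled similarly. The last of these needs care: $b=\pm a$ corresponds to $P'=\pm P$ up to the automorphisms $(x,y)\mapsto(\pm x,\pm y)$, and the paper's conditions on $Q$ and $Q-\varphi(P)$ lying outside $\E'[2]$ should encode exactly this. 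Translating these via $\widehat\varphi\circ\varphi=[2]$ is the main obstacle. I expect the statement's conditions to be exactly the images of the degeneracy loci, but matching them will require explicit bookkeeping of which torsion translates produce $a=\pm b$ or the $y=0$ cases. I would first do this bookkeeping on the generic fiber and then check that nothing extra appears over $Y$.
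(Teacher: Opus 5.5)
Your proposal is correct and is essentially the paper's proof. The first two equations give $\E\times_Y\E$ via $\mathcal V_t$. The function $x^2-1$ agrees with the $\widehat\varphi$-descent map $(x_0,y_0)\mapsto x_0$ up to the constant $-u$ and squares, so the $w$-cover is the double cover $(P,Q)\mapsto(P,P-\widehat\varphi(Q))$. The open conditions are a direct computation, which the paper likewise only calls ``tedious but straightforward.''

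One small correction in Step 3: the points with $y=0$ and at infinity are only six of the eight sections in $\E(Y)\simeq\grp{2,4}$. The other two are $(0,\pm t)$, which give the condition $a\neq 0$. The $a=\pm b$ bookkeeping closes at once if you note that $(x,y)\mapsto(-x,\pm y)$ on $\mathcal V_t$ is $P\mapsto \mp P+T$, where $T=(0,0)\in\E$ and $\langle T\rangle=\widehat\varphi(\E'[2])$.
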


\begin{proof}
    In the $\mathcal{V}_t$ model for $\mathcal E$,  Eq.\ \ref{eq:2244-threefold} shows that $A(2,2,4,4)$ is the double cover of $\E \times_Y \E$ obtained by taking the square root of the function $(x_1^2 - 1)/(x_2^2 - 1)$.  For $u \in Y(\Q)$, we write $E = \E_u$ and $E' = \E'_u$ for the specializations. As in \S \ref{sec: small and two-power}, $(x,y) \mapsto x^2 -1$ is the map $E(\Q)/\widehat{\varphi}(E'(\Q)) \to \Q^\times/\Q^{\times 2}$ arising from $\widehat{\varphi}$-descent in this non-Weierstrass model. Thus, $(P_1,P_2) \in (\E \times_Y \E)(\Q)$ lifts to a rational point of $A(2,2,4,4)$ if and only if $P_1 - P_2 = \widehat\varphi(Q)$ for some $Q \in E'(\Q)$.
    Each choice of square root of $(x_1^2 - 1)/(x^2_2 - 1)$ determines a $Q$ such that $P_1 - P_2 = \widehat{\varphi}(Q)$, which proves that $A(2,2,4,4)$ is an open subset of $\E \times_Y \E'$ and proves $(2)$ as well. 
    
    That the smoothness of 
    \[X \colon y^2 = x(x+x_1^2)(x+x_2^2)(x+1^2)(x+t^2) = x(x+a^2)(x+b^2)(x+c^2)(x+d^2),\] translates to the conditions given in $(a)$ and $(b)$ follows from a tedious but straightforward computation.
\end{proof}

\begin{corollary}
    There are infinitely many genus-two curves $X/\Q$ such that $[2,2,4,4] \subset J(\Q)$. 
\end{corollary}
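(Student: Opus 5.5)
Our plan is to deduce the Corollary from Proposition~\ref{prop: 2,2,4,4 fibration of surfaces}: we exhibit infinitely many rational points on $A(2,2,4,4)$ that lie over infinitely many distinct fibers $u\in Y(\Q)$, and hence give infinitely many non-isomorphic curves $X$.

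Step one is to find a fiber where there is room for such points. We look for a value $u_0 \in Y(\Q)$ such that $E = \E_{u_0}$ has positive rank. Then $E'(\Q)$ also has positive rank, since $E$ and $E'$ are isogenous. Such a $u_0$ should be easy to find by a small search: $\E$ is the universal curve over $Y_1(2,4)$, and the family $y^2=x(x+1)(x+u^2)$ contains many rank-one specializations. For instance, we would test small $u$ with Magma and certify the rank with a $2$-descent.

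Step two is to produce the points. Fix a non-torsion $P_0 \in E(\Q)$ and a non-torsion $Q_0\in E'(\Q)$, and take pairs $(P,Q) = (mP_0, nQ_0)$. Conditions (a) and (b) of the proposition exclude only finitely many $(m,n)$ on each ray. We check this as follows.
\begin{itemize}
\item If $mP_0$ is torsion, then $m=0$.
\item The point $mP_0-\widehat\varphi(nQ_0)$ is torsion only if $mP_0 = \widehat\varphi(nQ_0)$ modulo torsion. In the rank-one case this is a single linear relation, so it fails for all but one value of $n$ for each $m$.
\item The conditions on $Q$ and $Q - \varphi(P)$ are handled in the same way.
\end{itemize}
So infinitely many rational points of $A(2,2,4,4)$ lie over $u_0$. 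These give curves $y^2 = x(x+x_1^2)(x+x_2^2)(x+1)(x+t^2)$ with $[2,2,4,4]\subset J(\Q)$.

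The main obstacle is showing that these points give infinitely many non-isomorphic curves, rather than finitely many isomorphism classes that repeat. The curve attached to $(P_1,P_2)$ is determined by $x(P_1)^2, x(P_2)^2$ together with $t$. The set of branch points $\{0,\infty,-x_1^2,-x_2^2,-1,-t^2\}$ changes as $x_1$ varies. A fixed isomorphism class of genus-two curve with all Weierstrass points rational admits only finitely many normalized models of this shape: there are $6!$ orderings of the branch points, and each ordering determines the M\"obius normalization up to finitely many choices. So each isomorphism class occurs for only finitely many values of $x_1$. Since $x(mP_0)$ takes infinitely many values, we obtain infinitely many isomorphism classes.

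If a positive-rank fiber turns out to be awkward, the fallback is to vary $u$ instead. We would choose a rational section of $\E$ over a base change $\P^1\to Y$, and use the torsion points $P\in\grp{2,4}$ shifted appropriately, so that conditions (a) and (b) hold generically. This would give a one-parameter family directly, and non-constancy of the Igusa invariants would then be checked by a single computation.
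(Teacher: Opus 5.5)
Your proposal is correct and is essentially the paper's argument. The paper also specializes to a single positive-rank fiber (it takes $u=7$, where $\E_7$ has rank $1$), and obtains infinitely many rational points of $\E\times_Y\E'\simeq A(2,2,4,4)$ from infinitely many pairs $P_1,P_2\in E(\Q)$ with the same image in the finite group $E(\Q)/\widehat{\varphi}(E'(\Q))$. Your explicit check of conditions (a) and (b), and your finiteness-of-normalized-models argument showing the curves fall into infinitely many isomorphism classes, fill in details the paper leaves implicit; the fallback paragraph is unnecessary.
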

\begin{proof}
    It is enough to exhibit a single specialization $E = \E_u$ of positive rank over $\Q$, since then there are infinitely many pairs $P_1,P_2 \in E(\Q)$ with the same image in the finite group $E(\Q)/\widehat{\varphi}(E'(\Q))$. By construction we have $P_1 - P_2 = \widehat{\varphi}(Q)$, and so $(P_1,Q)$ gives a rational point on $\E \times_Y \E' \simeq A(2,2,4,4)$. For example, $\E_7$ has rank $1$ over $\Q$. 
\end{proof}

There are rational curves in $A(2,2,4,4)$ as well. Here is an example. 
    \begin{proposition}\label{prop: rational curve in A(2,2,4,4)}
    For $s \in \Q$, define
    \begin{align*}
    a &= (s - 2) (s^2 - s + 1/2) (s^2 - 3/2) (s^4 + s^2 + 9/4),\\
 b&=  (s^2 - 2s + 1/2) (s^2 - 2s + 9/2) (s^2 - 3/2) (s^2 + 1),\\
 c&=-(s - 2) (2s^2 - 2s + 1)(s^4 + s^2 + 9/4),\\
 d&= (s - 2) (2s^2 - 2s + 1) (2s^2 - 3) (s^2 + 1)
\end{align*}
Then whenever $X_s \colon y^2 = x(x+a^2)(x+b^2)(x+c^2)(x+d^2)$ is smooth, we have $[2,2,4,4]\subset J_s(\Q)$.
\end{proposition}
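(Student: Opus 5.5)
By Lemma \ref{lem: 2244}(b), $X_s$ already has $[2,2,2,4]\subseteq J_s(\Q)$, because $A=a^2,\dots,D=d^2$ are squares. By Lemma \ref{lem: 2244}(c), it then suffices to show that the three quantities in Eq.~\ref{eq:2244-threefold},
\[
(a^2-c^2)(a^2-d^2),\qquad (b^2-c^2)(b^2-d^2),\qquad (a^2-c^2)(b^2-c^2),
\]
are squares in $\Q(s)$. The fourth expression in (c) is then automatically a square: Lemma \ref{lem: 2244}(c) lists four products, and their product is a square, so the fourth is a square times the others. I would double-check this bookkeeping against the precise list in (c), relabeling $A,B,C,D$ if needed.

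The plan is to factor each difference of squares as $a^2-c^2=(a-c)(a+c)$, and likewise for the others. Each linear combination of $a,b,c,d$ should factor nicely over $\Q[s]$, since the parametrization was presumably obtained from a rational curve on $\mathcal{A}$. Concretely, I would first compute $a\pm c$ and $a\pm d$. Here $a$, $c$ and $d$ share the factor $(s-2)$; $a$ and $c$ also share $(s^4+s^2+9/4)$; and $c$ and $d$ share $(2s^2-2s+1)$. For instance,
\[
a\pm c=(s-2)(s^4+s^2+9/4)\bigl[(s^2-s+1/2)(s^2-3/2)\mp(2s^2-2s+1)\bigr],
\]
and the bracket is a quartic. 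I expect it to split into quadratics that recur in the other differences. I would do the analogous computation for $a\pm d$, $b\pm c$ and $b\pm d$. I would then collect the multiset of irreducible factors appearing in each of the three products and check that every factor occurs to even multiplicity, up to a constant that is a rational square.

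The main obstacle is the factorization of the brackets involving $b$. The polynomial $b$ shares with $a$ only the factor $(s^2-3/2)$, and with $d$ only $(s^2+1)$ (note $2s^2-3=2(s^2-3/2)$). So $b\pm c$ is a degree-$8$ polynomial with no obvious common factor, and showing that $(b^2-c^2)(b^2-d^2)$ is a square requires its factors to pair up with those of $b\pm d$. I would do this with a computer algebra factorization over $\Q$ rather than by hand, tracking the leading constants. These constants are powers of $2$ and the constant $9/4$, and I must make sure the overall scalar is a square rather than $2$ times a square.

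If the factorizations fail to pair, I would instead verify the claim conceptually through Proposition \ref{prop: 2,2,4,4 fibration of surfaces}. Set $t=d/c$ and $x_1=a/c$, $x_2=b/c$, and check that $(x_1,y_1)$ and $(x_2,y_2)$ are points on $\mathcal V_t$ whose difference lies in $\widehat\varphi(\mathcal E'(\Q(s)))$. Either way, the proof reduces to an identity in $\Q(s)$. For specializations, smoothness of $X_s$ ensures that the nonzero and distinctness hypotheses of Lemma \ref{lem: 2244} hold, so the squares specialize to squares in $\Q$ and the lemma applies at each $s$.
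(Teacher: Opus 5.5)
Your proposal is correct, but it takes a genuinely different route from the paper. The reduction holds up: the three quantities are $(C-A)(D-A)$, $(C-B)(D-B)$ and $(C-A)(C-B)$, and the product of all four expressions in Lemma~\ref{lem: 2244}(c) is a square. Polynomial squares specialize to rational squares, and smoothness gives nonvanishing.

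The paper never checks the square conditions directly. Instead it \emph{constructs} $a,b,c,d$. It takes the fiber $E_u$ with $u=\frac{(2s^2-7)(2s^2+3)}{3(2s^2-1)(2s^2+1)}$, which carries two independent sections $P_1',P_2'$. Since $x(P_2')$ is a square, $P_2'\in\widehat\varphi(E'(\Q))$. Proposition~\ref{prop: 2,2,4,4 fibration of surfaces} then lifts $(P_1,P_1+P_2)$ to $A(2,2,4,4)$. Lemma~\ref{lem: model isom} converts this to $(a:b:c:d)=(x(P_1):x(P_1+P_2):1:t)$. Your fallback is essentially this argument, except that you would have to find the sections yourself.

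Your main route goes through, and more cleanly than you fear. Note that $2s^2-2s+1=2(s^2-s+1/2)$, and that $b$ and $d$ share both $s^2+1$ and $s^2-3/2$. From these one gets $a/c=(3-2s^2)/4$. Writing $F=2s^3-6s^2+5s+3$ and $G=2s^3-2s^2+5s-3$, one finds
\[
8(b+c)=(2s^2+1)F^2,\qquad 8(b-c)=(2s^2-7)G^2,
\]
\[
8(b+d)=(2s^2-3)(s^2+1)(2s^2+1)(2s^2-7),\qquad 8(b-d)=(2s^2-3)(s^2+1)(2s^2-8s+5)^2.
\]
Similarly, $a\pm c$ and $a\pm d$ factor with brackets $s^2-7/2$, $s^2+1/2$, $(s^2+5/2)^2$ and $(s^2-7/2)(s^2+1/2)$. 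Hence each of $a^2-c^2$, $a^2-d^2$, $b^2-c^2$ and $b^2-d^2$ equals $(2s^2+1)(2s^2-7)$ times a square in $\Q(s)$, with square constants. So every required product is a square.

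The two approaches buy different things. Yours is a self-contained finite check that depends only on Lemma~\ref{lem: 2244}(c), bypassing Proposition~\ref{prop: 2,2,4,4 fibration of surfaces} and Lemma~\ref{lem: model isom} entirely. The paper's explains where the curve comes from: the cubics $F$ and $G$ are exactly those appearing in $x(P_1'+P_2')$. It also supplies the point $(P_s,\varphi(P_s))$ on $\E\times_Y\E'$, whose multiples give the subsequent Zariski-density corollary.
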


\begin{proof}
    Let $u = (1/3) (2s^2 - 1)^{-1}(2s^2 + 1)^{-1} (2s^2 - 7) (2s^2 + 3)$ and consider the elliptic surface $E_u \colon y^2 =  x(x+1)(x+u^2)$ of Mordell--Weil rank two, with independent points $P'_1$ and $P'_2$ satisfying
\begin{align*}x(P'_1) &= (-1/3) (2s^2 + 1)^{-2} (2s^2 - 1)^{-1} (2s^2 + 3) (2s^2 - 7)^2,\\
    x(P'_2) &= (1/9) s^2(2s^2 - 1)^{-2} (2s^2 - 7)^2\end{align*} and
    $$x(P'_1 +P_2') = (-1/3) (2s^2 - 1)^{-1}  (2s^2 + 3) (2s^3 - 2s^2 + 5s - 3)^{-2}  (2s^3 - 6s^2 + 5s + 3)^2.$$
    Since $x(P'_2)$ is a square, we have $P'_2 \in \widehat{\varphi}(E'(\Q))$. Let $t = (u+1)/(u-1)$ and let $P_1,P_2 \in \mathcal{V}_t(\Q) \simeq \E_u(\Q)$ be the corresponding points on $\mathcal{V}_t$.
    Then $(P_1,P_1 + P_2) \in (\E \times_Y \E)(\Q)$ lifts to a $\Q$-rational point on $A(2,2,4,4) \simeq \E \times_Y \E'$. The corresponding point on $A(2,2,4,4)$ is then $(a : b : c : d) = (x(P_1) : x(P_1 + P_2) : 1 : t)$. Using Lemma \ref{lem: model isom}, we obtain the formulas in the statement of the proposition.
\end{proof}

\begin{corollary}
    The rational points on the threefold $A(2,2,4,4)$ are Zariski dense.
\end{corollary}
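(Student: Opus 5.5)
We want Zariski density of rational points on the threefold $A(2,2,4,4)$, which by Proposition \ref{prop: 2,2,4,4 fibration of surfaces} is birational to the fiber product $\E\times_Y\E'$ over $Y=\P^1_u\setminus\{0,\pm1,\infty\}$. The plan is to fiber the threefold over $Y$ and combine two facts: over a Zariski dense set of $u\in Y(\Q)$ the fiber $E_u\times E'_u$ has Zariski dense rational points, and a subset of the threefold that meets infinitely many fibers in a dense set is itself dense.

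\textbf{Step 1: dense rational points on many fibers.} Proposition \ref{prop: rational curve in A(2,2,4,4)} supplies the rational curve $s\mapsto u(s)=\tfrac13(2s^2-7)(2s^2+3)(2s^2-1)^{-1}(2s^2+1)^{-1}$ in $Y$, together with two sections $P_1',P_2'$ of $\E$ over the $s$-line that are independent in the generic Mordell--Weil group. By Silverman's specialization theorem, for all but finitely many $s\in\Q$ the specializations remain independent, so $E_{u(s)}(\Q)$ has rank at least $2$. Since $u(s)$ is nonconstant, this produces infinitely many distinct $u\in Y(\Q)$ with $\operatorname{rank} E_u(\Q)>0$. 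Because $E'_u$ is isogenous to $E_u$, it has the same rank.

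\textbf{Step 2: density inside each fiber.} For such a $u$, both $E_u(\Q)$ and $E'_u(\Q)$ contain points of infinite order, and the group $E_u(\Q)\times E'_u(\Q)$ is Zariski dense in $E_u\times E'_u$. To see this, note that the Zariski closure of a subgroup of an abelian surface is an algebraic subgroup, and its identity component is an abelian subvariety. Choose $P\in E_u(\Q)$ and $Q\in E'_u(\Q)$ of infinite order. The closure of $\langle P\rangle\times\langle Q\rangle$ then contains $E_u\times 0$ and $0\times E'_u$, hence all of $E_u\times E'_u$. Removing the proper closed conditions (a) and (b) of Proposition \ref{prop: 2,2,4,4 fibration of surfaces}, which exclude finitely many translates of curves, still leaves a dense set of rational points of $A(2,2,4,4)$ in that fiber.

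\textbf{Step 3: density in the total space.} Let $Z$ be the Zariski closure of $A(2,2,4,4)(\Q)$ and suppose $Z\neq A(2,2,4,4)$. Then $Z$ has dimension at most $2$, and every irreducible component of $Z$ either dominates $Y$, in which case its generic fiber has dimension at most $1$, or maps to a single point of $Y$. In either case, only finitely many fibers of $Z\to Y$ can be $2$-dimensional. This contradicts Step 2, which gives infinitely many $u$ whose entire $2$-dimensional fiber lies in $Z$.

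The main obstacle is Step 1: making sure that infinitely many distinct $u$ have positive rank, that is, checking that the specialization of $P_1'$ is non-torsion for infinitely many $s$. Silverman's theorem handles this once one confirms that $P_1'$ is non-torsion over $\Q(s)$. That in turn is immediate, either from its specialization at a single value of $s$ having non-integral $x$-coordinate in a reduced model (Nagell--Lutz), or from the rank-$2$ claim in Proposition \ref{prop: rational curve in A(2,2,4,4)}. As an alternative to Step 1 one could use root numbers, but the rational curve already in hand suffices.
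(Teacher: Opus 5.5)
Your proposal is correct and follows essentially the same route as the paper. Both arguments use the rational curve $s\mapsto u(s)$ from the preceding proposition to get infinitely many fibers $E_u\times E'_u$ of positive rank. They then take multiples of infinite-order points in each factor to get Zariski density in each such fiber (the paper uses the points $(nP_s,m\varphi(P_s))$), and conclude density of the threefold from density in infinitely many fibers. Your Silverman specialization step and the dimension count in Step 3 correctly supply details the paper leaves implicit.
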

\begin{proof}
 This follows from taking points of the form $(nP_s,m\varphi(P_s)) \in \E \times_Y \E'$, where $(P_s,\varphi(P_s)) \in \E \times_Y \E'(\Q)$ corresponds to the curve $X_s$ in Proposition \ref{prop: rational curve in A(2,2,4,4)}. Indeed, this gives infinitely many rational curves in $A(2,2,4,4)$ and the union of their rational points is Zariski dense in infinitely many abelian surface fibers.
 \end{proof}

The family above produced the \(\grp{2,2,4,4}\) example in
Table~\ref{tab:census}.  One can show that $A(2,2,4,4)$ is a Calabi-Yau threefold, so the Zariski density of its rational points (as opposed to a parameterization) is the best qualitative statement one can hope for. 

\subsection{[2,2,2,8]-torsion}

Let $[D - K] = (Q,\alpha)$ be the point of order $4$ in Eq.~\ref{eq:square-branch} with Mumford representation $(Q,\alpha)$ as in Lemma \ref{lem: 2244}(b).  Then $D = 2P$ for some point $P \in X(\Q)$ if and only if $\mathrm{Disc}(Q) = 0$, which is equivalent to 
\begin{equation}
S \colon  (ab+ac+ad+bc+bd+cd)^2=4abcd.
\label{eq:2228-k3}
\end{equation}
Since the class $[P - \infty]$ has order $8$, any non-degenerate $[a : b : c : d] \in S(\Q)$ corresponds to a genus-two curve with $[2,2,2,8] \hookrightarrow
J(\Q)_{\mathrm{tors}}$. One finds such rational points with a naive search. 

The quartic surface $S$ is a well-studied K3 surface, perhaps best known as the universal elliptic curve over $X_1(8)$. See \cite{BertinLecacheux} for its many incarnations and for the wealth of elliptic fibrations and rational curves on $S$.  One model for $S$ as an elliptic surface is 
\begin{equation}
    \mathcal{A}_t \colon y^2 = x(x-t-1)(x-(t+1)/t),
\end{equation}
which has the infinite order section $(1,1)$. 
Translating to the original model, we obtain the following rational curve:
\[
 a=\frac{4t^2(t+1)}{(t^2+t+1)^2},\qquad
 b=\frac{t}{t+1},\qquad c=-1,\qquad d=-t.
\]
Taking multiples in each fiber, we conclude:
\begin{theorem}
    The moduli space $M(2,2,2,8)$ over $\Q$ of genus-two Jacobians with full level $2$-structure and with an order $8$ divisor class $[P -\infty]$ is a K3 surface and has a Zariski dense set of rational points.
\end{theorem}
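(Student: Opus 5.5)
The plan has three parts: identify the moduli space with the surface $S$ of Eq.~\ref{eq:2228-k3}, show $S$ is a K3 surface, and then produce a Zariski dense set of rational points.

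\textbf{Step 1: identification with $S$.} By Lemma~\ref{lem: 2244}(a),(b), a genus-two curve with full level $2$-structure (ordered Weierstrass points, two of them sent to $0$ and $\infty$) and with a point of order $4$ doubling to $(0,0)-\infty$ has a model $y^2=x(x+a^2)(x+b^2)(x+c^2)(x+d^2)$. This model is unique up to scaling $[a:b:c:d]$ and up to the sign choices that determine which order-$4$ point $D_0$ we take.

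The order-$8$ condition is that $D_0 = 2P-\infty\cdot$(appropriately), i.e.\ $[P-\infty]$ doubles to $[D_0-2\infty]$. This happens exactly when the Mumford quadratic $x^2-s_2x+s_4$ is a perfect square, which is the condition $s_2^2=4s_4$. That is precisely the equation of $S$ in $\P^3_{a,b,c,d}$.

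Conversely, any point of $S$ outside the discriminant locus gives such a triple. This yields a birational identification of $M(2,2,2,8)$ with $S$.

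\textbf{Step 2: $S$ is a K3 surface.} $S$ is a quartic surface in $\P^3$. I would check that its singularities are only rational double points, by computing the singular locus (finitely many points, e.g.\ coordinate points and points like $[1:1:-1:-1]$) and checking they are ordinary nodes. Then $\omega_S=\O_S$ and $h^1(\O_S)=0$, so the minimal resolution is K3. Alternatively, I would cite \cite{BertinLecacheux}, which identifies $S$ with the universal elliptic curve over $X_1(8)$, a known K3 surface.

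\textbf{Step 3: Zariski density.} I would use the elliptic fibration $\mathcal{A}_t\colon y^2=x(x-t-1)(x-(t+1)/t)$ over $\P^1_t$, with the section $\sigma=(1,1)$. First I need a birational map between $\mathcal{A}$ and $S$ that matches fibers and carries $\sigma$ to the displayed rational curve $(a,b,c,d)=(4t^2(t+1)/(t^2+t+1)^2,\ t/(t+1),\ -1,\ -t)$. One can verify directly that this curve satisfies $s_2^2=4s_4$.

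Next, $\sigma$ has infinite order in $\mathcal{A}(\Q(t))$. Specializing at one convenient $t_0$ and checking (via Nagell--Lutz or a height computation) that $\sigma(t_0)$ is non-torsion suffices. By Silverman's specialization theorem, or just by the torsion of $\mathcal{A}_{t_0}(\Q)$ being bounded, $\sigma(t_0)$ then has infinite order for all but finitely many $t_0\in\Q$.

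For each such $t_0$, the multiples $n\sigma(t_0)$ are infinite, hence Zariski dense in the fiber curve $\mathcal{A}_{t_0}$. A union of infinitely many fibers, each with dense rational points, is Zariski dense in the surface.

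Finally, the degenerate locus (where $X$ is singular, i.e.\ some $a^2,b^2,c^2,d^2,0$ collide) is a proper closed subset and can be removed. By Step 1 the remaining points correspond to the desired Jacobians.

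\textbf{Main obstacle.} The hardest part is the explicit birational map between $\mathcal{A}_t$ and $S$, and confirming that it respects the fibration so that multiples of $\sigma$ make sense fiberwise. I would obtain it by taking the pencil of curves on $S$ cut out by $d=tc$, showing the generic member has genus one with a rational point from the displayed curve, and transforming to Weierstrass form.
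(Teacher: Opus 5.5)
Your proposal is correct and follows the paper's argument. You cut out $S$ by the discriminant $s_2^2-4s_4$ of the Mumford quadratic, take the K3 property from Bertin--Lecacheux, and get density from multiples of the infinite-order section $(1,1)$ of $\mathcal{A}_t$ in each fiber. Two small remarks:

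\begin{enumerate}
\item In your alternative direct K3 check, the four coordinate points are $D_4$ singularities rather than ordinary nodes. They are still rational double points, so the conclusion stands.
\item Your ``main obstacle'' is only a short computation. In the chart $c=1$, $d=t$, the fiber of the pencil is $(ab+(1+t)(a+b)+t)^2=4tab$. As a quadratic in $a$, its discriminant is $-16t(1+t)b(b+1)(b+t)$. The substitution $b=-tx/(1+t)$ turns this into a nonzero square times $x(x-t-1)(x-(t+1)/t)$. So the generic fiber of the pencil $d=tc$ is birational to $\mathcal{A}_t$ over $\Q(t)$.
\end{enumerate}
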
 

\subsection{[2,22]-torsion}

We know of only two genus-two curves  with $[2,22]\hookrightarrow J(\Q)_{\mathrm{tors}}$, namely the curves
\begin{equation}\label{eq: 2-22}
    y^2 + (x^2 + x)y = x^6 - 3x^5 + 9x^4 - 5x^3 + 12x^2 - 6x
\end{equation}
and
\[y^2 + (x^2 + x)y = 30x^5 - 120x^4 - 259x^3 + 18x^2 + 138x + 36,\]
from the Booker--Sutherland database.  Interestingly, they both have $\End(J_{\bar \Q}) = \Z[\phi]$, where $\phi^2 - \phi -1 = 0$, two  Weierstrass points $W_1,W_2 \in X(\Q)$, and $P \in X(\Q)$ such that $P - W_i$ has order $22$.

\section{Special loci}\label{sec: special loci}

For a finite abelian group $G$, we informally write  $\mathcal{A}_2(G)$ for the moduli space of genus-two Jacobians together with an embedding $G \hookrightarrow J$. This is a finite cover of the Siegel modular threefold $\mathcal{A}_2 = \mathcal{A}_2(\{0\})$. 

We have seen above that for many of the largest groups $G$ realized as $J(\Q)_{\mathrm{tors}}$, the known rational points in $\mathcal{A}_2(G)$ all lie  in certain special loci of  $\mathcal{A}_2(G)$. These loci are either defined by the condition that $J$ has extra endomorphisms or by the condition that one or more of the marked torsion points lies on the curve. The fact that the largest known torsion groups in genus-two Jacobians (of order $128$) appear in split Jacobians is itself an example of this phenomenon, as $\End(J) \otimes \Q$ then contains $\Q \times \Q$. It is hard to tell if this is because we have not searched long enough for the generic examples or if this is the truth.  In any case, it would be worthwhile to study the geometry and arithmetic of these special loci on their own terms. 

Loci with extra endomorphisms define sub-Shimura varieties of $\mathcal{A}_2(G)$, or more precisely, images of other Shimura varieties under finite morphisms. The important examples here are Hilbert modular surfaces (with appropriate level structure) and their associated Humbert surfaces, which are by definition images of Hilbert modular surfaces in $\mathcal{A}_2$ (or more generally $\mathcal{A}_2(G)$) under forgetful maps. The geometry of Hilbert modular surfaces with level structure has been studied to some extent \cite{Hamahata}, but there is still much that has not been worked out, e.g.\ to classify exactly which of them are rational. Their arithmetic is even less studied, though there are a few exceptional cases \cite{AlessandriCoppola,BFS2023,Lange,LSSV}. 

Loci where one or two marked torsion divisors lie on the curve are even more mysterious. These are not in general Shimura varieties, but as the surface $\mathcal{S}^\circ$ (as well as Elkies' $[2,2,2,10]$ family) shows, they tend to have beautiful symmetries and presumably very interesting arithmetic. 

\section{Realized torsion groups}
\label{sec:census}

Table~\ref{tab:census} lists the 73 groups for which we found a genus-two curve \(X/\Q\) with \(\Jac(X)_{\Qbar}\) simple and
\(J(\Q)_{\mathrm{tors}} \simeq G\).  Each row shows
the smallest-conductor known example, preferring ``generic'' examples, i.e.\ those with \(\End(J_{\Qbar})=\Z\).  The \emph{Source(s)} column gives the first known source of a realization; rows marked ``new'' display equations first
found in the present work.
Equations are hyperlinked to an LMFDB home page in cases where one exists (many are in the ``alpha'' version of the LMFDB which contains a preliminary version of the Booker--Sutherland database).

Table~\ref{tab:splitcensus} is the geometrically split companion of
Table~\ref{tab:census}, displaying the groups for which we have found a genus-two curve $C/\Q$ with $\Jac(C)_{\Qbar}$ \emph{split} and
$J(\Q)_{\mathrm{tors}}$ equal to the displayed group. While most of these are taken from the literature, the groups \(\grp{11}\) and \(\grp{2,2,4,4}\) do not appear to have been previously realized by genus-two curves with geometrically split Jacobians.

\begin{remark}
For the groups \(\grp{2,22}\) and \(\grp{31}\) listed in Table~\ref{tab:census}, every known example is non-generic (they have real multiplication). For \(\grp{2,2,14}\) the
generic examples are new; the displayed curve is the smallest of ten.
\end{remark}

\section{Open questions}\label{sec: open questions}

For the finite abelian group $G  = [n_1,\ldots, n_4]$, we  write $A_2(G)$ for the coarse moduli space of genus-two curves $X$ together with an embedding $G \hookrightarrow \Jac(X)$. This space may have more than one connected component. 
\begin{enumerate}
    \item Find more groups $G$ that can be realized as $J(\Q)_{\mathrm{tors}}$, with $J$ geometrically simple (resp.\ split). In the geometrically simple case, the smallest unrealized group is $[5,5]$, while the smallest integer not realized as $\# J(\Q)_{\mathrm{tors}}$ is $35$. 
    \item Which groups $G$ arise as $J(\Q)_{\mathrm{tors}}$ for {\it infinitely many} pairwise geometrically non-isomorphic Jacobians $J$? 
    \item Exhibit an abelian group $G  = [n_1,n_2,n_3,n_4]$ with $n_1,n_2 \in \{1,2\}$ such that there are provably no genus-two Jacobians $J$ over $\Q$ with $J(\Q)_{\mathrm{tors}} \simeq G$.  
    (Presumably very hard.) 
    \item Formulate a conjecturally complete list of groups of the form $J(\Q)_{\mathrm{tors}}$ for some genus-two Jacobian $J$ over $\Q$. 
    \item What are the possible groups $J(\Q)_{\mathrm{tors}}$ in the geometrically split case?
    \item  What are the possible groups $J(\Q)_{\mathrm{tors}}$ in the split-over-$\Q$ case?
    \item For which groups $G$ is $A_2(G)(\Q)$ Zariski dense in $A_2(G)$? 
    \item For a given $G$, determine lower bounds on the dimension of the Zariski closure of $A_2(G)(\Q)$ in $A_2(G)$. 
    \item For a given $G$, formulate a convincing conjecture for the dimension of the Zariski closure of $A_2(G)(\Q)$ in $A_2(G)$.
    \item For a given group $G$, classify all the rational points on $A_2(G)$. 
    \item For which $G$ is there a component of $A_2(G)$ that is rational? Geometrically rational? Unirational? Uniruled? Kodaira dimension $0$? Not of general type? By \cite{Borisov}, each of these form a finite list.
    
    \item For a given $G$, is there a point in $A_2(G)(\Q)$ not contained in any curve with infinitely many rational points? Is there any  $G$ with this property?
    \item For which groups $G$ is $A_2(G)(\Q)$ supported entirely on ``special loci'', i.e.\ subvarieties of $A_2(G)$ where either $\End(J_{\overline \Q}) \neq \Z$ or $G \subset J$ intersects the image of $X \rightarrow J$ under $P \mapsto P-\iota(P)$? Are there any such $G$?
    \item Classify the Kodaira dimension of all positive dimensional special loci of $A_2(G)$ (as $G$ varies). Are there finitely many not of general type? 
    \item Given a point in $A_2(G)(\Q)$, is there a (convincing) geometric  ``explanation'' for its existence? 
\end{enumerate}

For many of these questions, even formulating a (convincing) conjectural answer would be interesting, even for a single group $G$.

\section{AI workflow and virtual lab notebook}\label{sec: ai}

This project spanned a remarkable trajectory of model capabilities.  When we started the project in July 2025, OpenAI o3, Claude Opus 4, and Gemini 2.5 Pro were the state of the art, and while we attempted to use all three (mostly out of curiosity), none of these tools contributed to our work.  Twelve months later, GPT 5.6 Sol and Fable 5 made significant contributions to our project.  The ways in which we use the tools have changed as their capabilities have improved, a coevolutionary process that we expect to continue.  Here we give our perspective, as of August 2026, on a collaboration that involved four human mathematicians and several agentic AI systems.

Aside from the capabilities trajectory, our main takeaways from the way our AI workflow evolved in this project are how impactful context is, and how powerful it is for agents to share information (with themselves, with other agents, and with humans). The latter we facilitated through a single large GitHub repository, which served as a shared lab notebook that also included skills and code that had been developed by humans and AI systems.  In the final and most productive phase of this project, we made the decision to let our agents ``off the leash'' (so to speak) and dump whatever they wanted into our shared repository with little or no human oversight. We were comfortable doing so because at that point we had essentially finished the project (or so we believed), and we figured that we had nothing to lose by letting them ``slop to their heart's content,'' given that we could easily verify any new discoveries they made using computer algebra systems.

The AI agents produced quite a lot of material (approximately 6850 files totaling nearly 300 MB), with various investigations of questionable utility.  The frontier models responsible for generating most of these files were Claude Opus 4.8 and 5, Fable 5, and GPT 5.5 and GPT 5.6 Sol, with minor contributions from Gemini 3.1 Pro and earlier Claude and GPT versions.  We have preserved these artifacts in a copy of our virtual lab notebook \cite{notebook_repo} that we have made public (this copy has been sanitized to remove API keys, details of our personal computing environments, and artifacts that are not directly relevant to this project). This repository also includes the \href{https://github.com/AndrewVSutherland/Genus2TorsionNotebook/blob/main/reports/order96-discovery-session/transcript.md}{transcript} of the conversation with Fable 5 via Claude Code in which the first \grp{2,2,2,12} example was discovered.  This is just one of many conversations that took place in the course of research, but it is a representative example of our workflow, especially in the later stages of the project.

The fact that each of us had (often extensive) histories of conversations with the models we were using meant that the agents effectively took different perspectives on the problem, and even different personas.  Rather than leading to any sort of model collapse, the interactions between the agents (via notes in our shared lab notebook) proved to be very productive. The agents also benefited from expert guidance and interaction; they often succeeded only after being pointed in the right direction, and the output they generated was in many cases curated and improved by us, and then fed back to the models. The conversation with Fable 5 that led to the discovery of $\grp{2,2,2,12}$ looks much more like a one-shot discovery than it actually was; the agent had access to several weeks' worth of discussions (both between and among humans and agents), along with summaries of several prior failed attempts aimed at this exact torsion subgroup, all of which were recorded in our lab notebook.
\bigskip\bigskip

\begingroup\scriptsize
\setlength{\LTpre}{4pt}\setlength{\LTpost}{4pt}
\setlength{\tabcolsep}{4pt}
\renewcommand{\arraystretch}{1.0}
\begin{longtable}{@{}>{\raggedright\arraybackslash}p{1.35cm}
 >{\raggedright\arraybackslash}p{\dimexpr\textwidth-1.35cm-1.3cm-8\tabcolsep\relax}
 >{\raggedright\arraybackslash}p{1.3cm}@{}}
\caption{All 73 finite abelian groups known to arise as $J(\Q)_{\mathrm{tors}}$ for a geometrically simple genus two Jacobian over \(\Q\), the
smallest-conductor known example, and sources.\label{tab:census}}\\
\toprule
Group & Curve & Source(s)\\
\midrule
\endfirsthead
\multicolumn{3}{@{}l}{{\normalsize\emph{Table~\ref{tab:census} (continued)}}}\\
\toprule
Group & Curve & Source(s)\\
\midrule
\endhead
\midrule
\endfoot
\bottomrule
\endlastfoot
$[\,]$ & \href{https://alpha.lmfdb.org/Genus2Curve/Q/?jump=[[-14580,51520,-63684,31054,-3823,-637,-18],[1]]}{$y^2 + y = -18x^{6}-637x^{5}-3823x^{4}+31054x^{3}-63684x^{2}+51520x-14580$} & \cite{BSSVY}\\
$\grp{2}$ & \href{https://www.lmfdb.org/Genus2Curve/Q/295/a/295/2}{$y^2 + (x^{2}+x+1)y = x^{5}-40x^{3}+22x^{2}+389x-608$} & \cite{BSSVY}\\
$\grp{3}$ & \href{https://www.lmfdb.org/Genus2Curve/Q/997/b/997/1}{$y^2 + y = x^{5}-2x^{4}+2x^{3}-x^{2}$} & \cite{BSSVY}\\
$\grp{4}$ & \href{https://www.lmfdb.org/Genus2Curve/Q/1070/a/2140/1}{$y^2 + (x^{3}+1)y = x^{3}-x$} & \cite{BSSVY}\\
$\grp{5}$ & \href{https://www.lmfdb.org/Genus2Curve/Q/277/a/277/2}{$y^2 + y = x^{5}-9x^{4}+14x^{3}-19x^{2}+11x-6$} & \cite{Ogg1973}\\
$\grp{6}$ & \href{https://www.lmfdb.org/Genus2Curve/Q/1038/a/1038/1}{$y^2 + (x^{2}+x)y = x^{5}-12x^{4}+26x^{3}+46x^{2}+21x+3$} & \cite{Flynn1991}\\
$\grp{7}$ & \href{https://www.lmfdb.org/Genus2Curve/Q/461/a/461/1}{$y^2 + x^{3}y = x^{5}-3x^{3}+3x-2$} & \cite{Ogg1973}\\
$\grp{8}$ & \href{https://www.lmfdb.org/Genus2Curve/Q/464/a/464/1}{$y^2 + (x+1)y = -x^{6}-2x^{5}-2x^{4}-x^{3}$} & \cite{BSSVY}\\
$\grp{9}$ & \href{https://www.lmfdb.org/Genus2Curve/Q/713/b/713/1}{$y^2 + (x^{3}+x+1)y = -x^{4}$} & \cite{Flynn1991}\\
$\grp{10}$ & \href{https://www.lmfdb.org/Genus2Curve/Q/389/a/389/1}{$y^2 + (x^{3}+x)y = x^{5}-2x^{4}-8x^{3}+16x+7$} & \cite{Flynn1991}\\
$\grp{11}$ & \href{https://www.lmfdb.org/Genus2Curve/Q/353/a/353/1}{$y^2 + (x^{3}+x+1)y = x^{2}$} & \cite{Ogg1973}\\
$\grp{12}$ & \href{https://www.lmfdb.org/Genus2Curve/Q/762/a/3048/1}{$y^2 + (x^{3}+x^{2}+x)y = x^{2}+x+1$} & \cite{BSSVY}\\
$\grp{13}$ & \href{https://www.lmfdb.org/Genus2Curve/Q/349/a/349/1}{$y^2 + (x^{3}+x^{2}+x+1)y = -x^{3}-x^{2}$} & \cite{Flynn1990,Leprevost1991a}\\
$\grp{14}$ & \href{https://www.lmfdb.org/Genus2Curve/Q/249/a/249/1}{$y^2 + (x^{3}+1)y = x^{2}+x$} & \cite{PP2012,PZP2013}\\
$\grp{15}$ & \href{https://www.lmfdb.org/Genus2Curve/Q/277/a/277/1}{$y^2 + (x^{3}+x^{2}+x+1)y = -x^{2}-x$} & \cite{Leprevost1991}\\
$\grp{16}$ & \href{https://www.lmfdb.org/Genus2Curve/Q/830/a/6640/1}{$y^2 + (x^{3}+1)y = -x^{5}+x^{4}-2x^{2}+x+1$} & \cite{BSSVY}\\
$\grp{17}$ & \href{https://www.lmfdb.org/Genus2Curve/Q/1996/b/510976/1}{$y^2 + (x^{3}+x^{2}+x)y = x^{3}+x^{2}+3x+1$} & \cite{Leprevost1991}\\
$\grp{18}$ & \href{https://www.lmfdb.org/Genus2Curve/Q/1180/a/18880/1}{$y^2 + (x^{3}+1)y = -2x^{4}+4x^{2}+2x$} & \cite{PP2012,PZP2013}\\
$\grp{19}$ & \href{https://alpha.lmfdb.org/Genus2Curve/Q/?jump=[[-2,5,-3,-5,1,1,1],[1,1,1]]}{$y^2 + (x^{2}+x+1)y = x^{6}+x^{5}+x^{4}-5x^{3}-3x^{2}+5x-2$} & \cite{Leprevost1991,PZP2013}\\
$\grp{20}$ & \href{https://www.lmfdb.org/Genus2Curve/Q/394/a/3152/1}{$y^2 + (x+1)y = -x^{5}$} & \cite{Leprevost1997}\\
$\grp{21}$ & \href{https://www.lmfdb.org/Genus2Curve/Q/388/a/776/1}{$y^2 + (x^{3}+x+1)y = -x^{4}+2x^{2}+x$} & \cite{Leprevost1991}\\
$\grp{22}$ & \href{https://www.lmfdb.org/Genus2Curve/Q/1192/a/19072/1}{$y^2 + (x^{3}+x)y = x^{3}-2x^{2}-x+1$} & \cite{Leprevost1993}\\
$\grp{23}$ & \href{https://alpha.lmfdb.org/Genus2Curve/Q/?jump=[[1,-2,2,1,-2,-1,1],[0,0,1]]}{$y^2 + x^{2}y = x^{6}-x^{5}-2x^{4}+x^{3}+2x^{2}-2x+1$} & \cite{Leprevost1993}\\
$\grp{24}$ & \href{https://www.lmfdb.org/Genus2Curve/Q/1908/a/183168/1}{$y^2 + (x^{3}+1)y = 2x^{4}+3x^{3}+4x^{2}+2x$} & \cite{Leprevost1993}\\
$\grp{25}$ & \href{https://alpha.lmfdb.org/Genus2Curve/Q/?jump=[[2,-23,41,75,25,-9],[1,1]]}{$y^2 + (x+1)y = -9x^{5}+25x^{4}+75x^{3}+41x^{2}-23x+2$} & \cite{Nicholls2018}\\
$\grp{26}$ & \href{https://alpha.lmfdb.org/Genus2Curve/Q/?jump=[[0,0,-3,-3,-3,-12],[1,1]]}{$y^2 + (x+1)y = -12x^{5}-3x^{4}-3x^{3}-3x^{2}$} & \cite{Leprevost1993}\\
$\grp{27}$ & \href{https://www.lmfdb.org/Genus2Curve/Q/604/a/9664/2}{$y^2 + (x^{3}+1)y = -x^{4}+x^{3}+x^{2}-x$} & \cite{Howe2015}\\
$\grp{28}$ & \href{https://www.lmfdb.org/Genus2Curve/Q/249/a/6723/1}{$y^2 + (x^{3}+1)y = -x^{5}+x^{3}+x^{2}+3x+2$} & \cite{PP2012,PZP2013}\\
$\grp{29}$ & \href{https://www.lmfdb.org/Genus2Curve/Q/976/a/999424/1}{$y^2 + (x+1)y = x^{6}-2x^{5}+2x^{3}-x^{2}$} & \cite{Leprevost1993}\\
$\grp{30}$ & \href{https://alpha.lmfdb.org/Genus2Curve/Q/?jump=[[0,-1,16,-12,-10,4,2],[0,0,1,1]]}{$y^2 + (x^{3}+x^{2})y = 2x^{6}+4x^{5}-10x^{4}-12x^{3}+16x^{2}-x$} & \cite{PP2015}\\
$\grp{31}$ & $y^2 + (-x^{2}-x)y = -839x^{6}+2841x^{5}-4587x^{4}+4300x^{3}-2466x^{2}+816x-126$ & \cite{costa}$^{\text{RM}}$\\
$\grp{32}$ & \href{https://alpha.lmfdb.org/Genus2Curve/Q/?jump=[[-47,-49,49,45,1,-1,1],[0,1,1]]}{$y^2 + (x^{2}+x)y = x^{6}-x^{5}+x^{4}+45x^{3}+49x^{2}-49x-47$} & \cite{Elkies2002}\\
$\grp{33}$ & \href{https://alpha.lmfdb.org/Genus2Curve/Q/?jump=[[2,13,21,7,9,-7,1],[1,1,1]]}{$y^2 + (x^{2}+x+1)y = x^{6}-7x^{5}+9x^{4}+7x^{3}+21x^{2}+13x+2$} & \cite{PP2012b,PZP2013}\\
$\grp{34}$ & \href{https://alpha.lmfdb.org/Genus2Curve/Q/?jump=[[15,12,7,3,0,-2],[0,0,1,1]]}{$y^2 + (x^{3}+x^{2})y = -2x^{5}+3x^{3}+7x^{2}+12x+15$} & \cite{Elkies2002}\\
$\grp{36}$ & \href{https://alpha.lmfdb.org/Genus2Curve/Q/?jump=[[9,6,-14,13,30,3],[0,0,1]]}{$y^2 + x^{2}y = 3x^{5}+30x^{4}+13x^{3}-14x^{2}+6x+9$} & \cite{PP2015}\\
$\grp{39}$ & \href{https://www.lmfdb.org/Genus2Curve/Q/1116/a/214272/1}{$y^2 + (x^{3}+1)y = x^{4}+2x^{3}+x^{2}-x$} & \cite{Elkies2002}\\
$\grp{40}$ & \href{https://alpha.lmfdb.org/Genus2Curve/Q/?jump=[[0,0,-2,-2,4,3],[1,1]]}{$y^2 + (x+1)y = 3x^{5}+4x^{4}-2x^{3}-2x^{2}$} & \cite{Elkies2002}\\
$\grp{2,\,2}$ & \href{https://www.lmfdb.org/Genus2Curve/Q/464/a/29696/2}{$y^2 + xy = 4x^{5}+33x^{4}+72x^{3}+16x^{2}+x$} & \cite{BSSVY}\\
$\grp{2,\,4}$ & \href{https://www.lmfdb.org/Genus2Curve/Q/997/a/997/1}{$y^2 + xy = x^{5}-8x^{4}+16x^{3}-x$} & \cite{BSSVY}\\
$\grp{2,\,6}$ & \href{https://www.lmfdb.org/Genus2Curve/Q/704/a/45056/1}{$y^2 + y = 4x^{5}+4x^{4}-x^{3}-2x^{2}$} & \cite{BSSVY}\\
$\grp{2,\,8}$ & \href{https://www.lmfdb.org/Genus2Curve/Q/464/a/29696/1}{$y^2 + (x+1)y = 8x^{5}+3x^{4}-4x^{3}-2x^{2}$} & \cite{BSSVY}\\
$\grp{2,\,10}$ & \href{https://www.lmfdb.org/Genus2Curve/Q/555/a/8325/1}{$y^2 + (x+1)y = 3x^{5}-2x^{4}-4x^{3}+x^{2}+x$} & \cite{BSSVY}\\
$\grp{2,\,12}$ & \href{https://www.lmfdb.org/Genus2Curve/Q/762/a/82296/1}{$y^2 + (x^{2}+x)y = x^{5}-8x^{4}+14x^{3}+2x^{2}-x$} & \cite{BSSVY}\\
$\grp{2,\,14}$ & \href{https://www.lmfdb.org/Genus2Curve/Q/1416/b/135936/1}{$y^2 + (x^{3}+x)y = -2x^{4}-x^{3}+x+1$} & \cite{BSSVY}\\
$\grp{2,\,16}$ & \href{https://alpha.lmfdb.org/Genus2Curve/Q/?jump=[[8,4,-3,1,-1,-1],[0,0,1,1]]}{$y^2 + (x^{3}+x^{2})y = -x^{5}-x^{4}+x^{3}-3x^{2}+4x+8$} & \cite{BookerSutherland}\\
$\grp{2,\,18}$ & \href{https://alpha.lmfdb.org/Genus2Curve/Q/?jump=[[7,2,1,3,-5,-1,1],[0,1,1]]}{$y^2 + (x^{2}+x)y = x^{6}-x^{5}-5x^{4}+3x^{3}+x^{2}+2x+7$} & \cite{BookerSutherland}\\
$\grp{2,\,20}$ & \href{https://alpha.lmfdb.org/Genus2Curve/Q/?jump=[[0,-180,-251,185,28,-30,4],[0,0,1]]}{$y^2 + x^{2}y = 4x^{6}-30x^{5}+28x^{4}+185x^{3}-251x^{2}-180x$} & \cite{BookerSutherland}\\
$\grp{2,\,22}$ & \href{https://alpha.lmfdb.org/Genus2Curve/Q/?jump=[[0,-6,12,-5,9,-3,1],[0,1,1]]}{$y^2 + (x^{2}+x)y = x^{6}-3x^{5}+9x^{4}-5x^{3}+12x^{2}-6x$} & \cite{BookerSutherland}$^{\rm RM}$\\
$\grp{2,\,26}$ & \href{https://alpha.lmfdb.org/Genus2Curve/Q/?jump=[[0,0,2,3,-12,-3,9],[1,0,1]]}{$y^2 + (x^{2}+1)y = 9x^{6}-3x^{5}-12x^{4}+3x^{3}+2x^{2}$} & \cite{BookerSutherland}\\
$\grp{2,\,28}$ & \href{https://alpha.lmfdb.org/Genus2Curve/Q/?jump=[[0,-3,4,34,-42,-3,9],[1,0,1]]}{$y^2 + (x^{2}+1)y = 9x^{6}-3x^{5}-42x^{4}+34x^{3}+4x^{2}-3x$} & \cite{BookerSutherland}\\
$\grp{3,\,3}$ & \href{https://alpha.lmfdb.org/Genus2Curve/Q/?jump=[[3,3,7,2,4,0,1],[1]]}{$y^2 + y = x^{6}+4x^{4}+2x^{3}+7x^{2}+3x+3$} & \cite{BFT2014}\\
$\grp{3,\,6}$ & \href{https://alpha.lmfdb.org/Genus2Curve/Q/?jump=[[21,-2,7,-5,0,-1],[1,1,1,1]]}{$y^2 + (x^{3}+x^{2}+x+1)y = -x^{5}-5x^{3}+7x^{2}-2x+21$} & \cite{BookerSutherland}\\
$\grp{3,\,9}$ & \href{https://alpha.lmfdb.org/Genus2Curve/Q/?jump=[[326,-75,-230,-113,40,75,16],[1,0,0,1]]}{$y^2 + (x^{3}+1)y = 16x^{6}+75x^{5}+40x^{4}-113x^{3}-230x^{2}-75x+326$} & \cite{Leprevost1995}\\
$\grp{4,\,4}$ & \href{https://alpha.lmfdb.org/Genus2Curve/Q/?jump=[[20493,6534,10832,2601,1823,242,99],[0,1,1]]}{$y^2 + (x^{2}+x)y = 99x^{6}+242x^{5}+1823x^{4}+2601x^{3}+10832x^{2}+6534x+20493$} & \cite{BookerSutherland}\\
$\grp{4,\,8}$ & \href{https://alpha.lmfdb.org/Genus2Curve/Q/?jump=[[164,682,450,-435,272,-81],[0,1,1]]}{$y^2 + (x^{2}+x)y = -81x^{5}+272x^{4}-435x^{3}+450x^{2}+682x+164$} & \cite{BookerSutherland}\\
$\grp{6,\,6}$ & $y^2 = x(39x^{2}-69x+125)(48x^{2}+8x+39)$ & new\\
$\grp{2,\,2,\,2}$ & \href{https://www.lmfdb.org/Genus2Curve/Q/2600/a/338000/1}{$y^2 + xy = 10x^{5}+8x^{4}-5x^{3}-3x^{2}+x$} & \cite{BSSVY}\\
$\grp{2,\,2,\,4}$ & \href{https://www.lmfdb.org/Genus2Curve/Q/3978/a/930852/1}{$y^2 + (x^{2}+x)y = x^{5}+3x^{4}-3x^{3}-8x^{2}+6x$} & \cite{BSSVY}\\
$\grp{2,\,2,\,6}$ & \href{https://www.lmfdb.org/Genus2Curve/Q/816/a/39168/1}{$y^2 + (x^{2}+1)y = 3x^{5}-4x^{3}-x^{2}+x$} & \cite{BSSVY}\\
$\grp{2,\,2,\,8}$ & \href{https://alpha.lmfdb.org/Genus2Curve/Q/?jump=[[0,-1,-6,-3,20,9],[0,1,1]]}{$y^2 + (x^{2}+x)y = 9x^{5}+20x^{4}-3x^{3}-6x^{2}-x$} & \cite{BookerSutherland}\\
$\grp{2,\,2,\,10}$ & \href{https://alpha.lmfdb.org/Genus2Curve/Q/?jump=[[0,0,-7,12,6,-12],[1,0,1]]}{$y^2 + (x^{2}+1)y = -12x^{5}+6x^{4}+12x^{3}-7x^{2}$} & \cite{BookerSutherland}\\
$\grp{2,\,2,\,12}$ & \href{https://alpha.lmfdb.org/Genus2Curve/Q/?jump=[[0,-5,-5,16,8,-7,1],[0,1,1]]}{$y^2 + (x^{2}+x)y = x^{6}-7x^{5}+8x^{4}+16x^{3}-5x^{2}-5x$} & \cite{BookerSutherland}\\
$\grp{2,\,2,\,14}$ & $y^2 = (x+1)(x+9)(2x-115)(6x+55)(3x^{2}-220x+2652)$ & \cite{BookerSutherland}, new\\
$\grp{2,\,2,\,20}$ & $y^2 = -(x-1)(6x+1)(2x+7)(6217x+1008)(21x^{2}-161x+144)$ & new\\
$\grp{2,\,4,\,4}$ & \href{https://alpha.lmfdb.org/Genus2Curve/Q/?jump=[[0,180,-210,-1850,2204,-116],[0,1,1]]}{$y^2 + (x^{2}+x)y = -116x^{5}+2204x^{4}-1850x^{3}-210x^{2}+180x$} & \cite{BookerSutherland}\\
$\grp{2,\,4,\,8}$ & $y^2 = x(3x-5)(5x+333)(16x^{2}+23x+576)$ & new\\
$\grp{2,\,2,\,2,\,2}$ & \href{https://alpha.lmfdb.org/Genus2Curve/Q/?jump=[[0,-1,-4,8,23,-27],[0,1,1]]}{$y^2 + (x^{2}+x)y = -27x^{5}+23x^{4}+8x^{3}-4x^{2}-x$} & \cite{BookerSutherland}\\
$\grp{2,\,2,\,2,\,4}$ & \href{https://alpha.lmfdb.org/Genus2Curve/Q/?jump=[[0,1,-6,-16,146,-225],[0,1,1]]}{$y^2 + (x^{2}+x)y = -225x^{5}+146x^{4}-16x^{3}-6x^{2}+x$} & \cite{BookerSutherland}\\
$\grp{2,\,2,\,2,\,6}$ & \href{https://alpha.lmfdb.org/Genus2Curve/Q/?jump=[[2,-12,11,18,-14,-6],[1,0,1]]}{$y^2 + (x^{2}+1)y = -6x^{5}-14x^{4}+18x^{3}+11x^{2}-12x+2$} & \cite{BookerSutherland}\\
$\grp{2,\,2,\,2,\,8}$ & $y^2 = x(x+1)(x+55^2)(x+99^2)(x+125^2)$ & new\\
$\grp{2,\,2,\,2,\,10}$ & $y^2 = x(x+1)(x-1)(3x-7)(8x-13)(24x+25)$ & \cite{Elkies2024}\\
$\grp{2,\,2,\,2,\,12}$ & $y^2 = x(x-120^2)(x-143^2)(x-266^2)(x-218^2)(x-241^2)$ & new\\
$\grp{2,\,2,\,4,\,4}$ & $y^2 = x(x+36^2)(x+57^2)(x+64^2)(x+132^2)$ & new\\
\end{longtable}
\endgroup

\bigskip\bigskip

\begingroup\scriptsize
\setlength{\LTpre}{4pt}\setlength{\LTpost}{4pt}
\setlength{\tabcolsep}{4pt}
\renewcommand{\arraystretch}{1.0}
\begin{longtable}{@{}>{\raggedright\arraybackslash}p{1.35cm}
 >{\raggedright\arraybackslash}p{\dimexpr\textwidth-1.35cm-1.3cm-8\tabcolsep\relax}
 >{\raggedright\arraybackslash}p{1.3cm}@{}}
\caption{All 77 finite abelian groups known to arise as $J(\Q)_{\mathrm{tors}}$ for a geometrically split genus two Jacobian over \(\Q\), the
smallest-conductor known example, and sources.\label{tab:splitcensus}}\\
\toprule
Group & Curve & Source(s)\\
\midrule
\endfirsthead
\multicolumn{3}{@{}l}{{\normalsize\emph{Table~\ref{tab:splitcensus} (continued)}}}\\
\toprule
Group & Curve & Source(s)\\
\midrule
\endhead
\midrule
\endfoot
\bottomrule
\endlastfoot
$[\,]$ & \href{https://www.lmfdb.org/Genus2Curve/Q/1083/b/390963/1}{$y^2 + y = -x^{6}+3x^{5}-50x^{4}+95x^{3}-14x^{2}-33x-6$} & \cite{BSSVY}\\
$\grp{2}$ & \href{https://www.lmfdb.org/Genus2Curve/Q/336/a/172032/1}{$y^2 + (x^{3}+x)y = -x^{6}+15x^{4}-75x^{2}-56$} & \cite{BSSVY}\\
$\grp{3}$ & \href{https://alpha.lmfdb.org/Genus2Curve/Q/?jump=[[-54,-318,-479,92,302,-5,-54],[1,1,0,1]]}{$y^2 + (x^{3}+x+1)y = -54x^{6}-5x^{5}+302x^{4}+92x^{3}-479x^{2}-318x-54$} & \cite{BSSVY}\\
$\grp{4}$ & \href{https://alpha.lmfdb.org/Genus2Curve/Q/?jump=[[-7536,-4824,-4239,-744,-304,76,-4],[0,0,1,1]]}{$y^2 + (x^{3}+x^{2})y = -4x^{6}+76x^{5}-304x^{4}-744x^{3}-4239x^{2}-4824x-7536$} & \cite{BSSVY}\\
$\grp{5}$ & \href{https://alpha.lmfdb.org/Genus2Curve/Q/?jump=[[-344,348,-334,17,14,-36,-8],[0]]}{$y^2 = -8x^{6}-36x^{5}+14x^{4}+17x^{3}-334x^{2}+348x-344$} & \cite{BSSVY}\\
$\grp{6}$ & \href{https://alpha.lmfdb.org/Genus2Curve/Q/?jump=[[-92,42,3,37,-12,-3,-5],[0,1,1]]}{$y^2 + (x^{2}+x)y = -5x^{6}-3x^{5}-12x^{4}+37x^{3}+3x^{2}+42x-92$} & \cite{BSSVY}\\
$\grp{7}$ & \href{https://alpha.lmfdb.org/Genus2Curve/Q/?jump=[[-3,-4,2,7,2,-4,-3],[1,0,0,1]]}{$y^2 + (x^{3}+1)y = -3x^{6}-4x^{5}+2x^{4}+7x^{3}+2x^{2}-4x-3$} & \cite{BSSVY}\\
$\grp{8}$ & \href{https://alpha.lmfdb.org/Genus2Curve/Q/?jump=[[-15,0,7,0,-4],[0,1,0,1]]}{$y^2 + (x^{3}+x)y = -4x^{4}+7x^{2}-15$} & \cite{BSSVY}\\
$\grp{9}$ & \href{https://alpha.lmfdb.org/Genus2Curve/Q/?jump=[[-3,1,-5,1,-2,-1],[0,1,0,1]]}{$y^2 + (x^{3}+x)y = -x^{5}-2x^{4}+x^{3}-5x^{2}+x-3$} & \cite{BSSVY}\\
$\grp{10}$ & \href{https://www.lmfdb.org/Genus2Curve/Q/363/a/43923/1}{$y^2 + x^{2}y = -11x^{5}-13x^{4}+7x^{3}+10x^{2}-x-2$} & \cite{BSSVY}\\
$\grp{11}$ & $y^2 = 1204142x^{6}-5109634x^{5}+31412066x^{4}-65405928x^{3}+99564424x^{2}+94188680x+9471400$ & new\\
$\grp{12}$ & \href{https://www.lmfdb.org/Genus2Curve/Q/294/a/294/1}{$y^2 + (x^{3}+1)y = x^{4}-x^{3}+x^{2}$} & \cite{BSSVY}\\
$\grp{14}$ & \href{https://alpha.lmfdb.org/Genus2Curve/Q/?jump=[[0,-4,1,-3,1,-1],[0,0,1,1]]}{$y^2 + (x^{3}+x^{2})y = -x^{5}+x^{4}-3x^{3}+x^{2}-4x$} & \cite{BookerSutherland}\\
$\grp{15}$ & \href{https://www.lmfdb.org/Genus2Curve/Q/484/a/1936/1}{$y^2 + x^{3}y = x^{4}+2x^{2}+1$} & \cite{BSSVY}\\
$\grp{16}$ & \href{https://alpha.lmfdb.org/Genus2Curve/Q/?jump=[[-125,0,97,0,-26,0,2],[0,1,0,1]]}{$y^2 + (x^{3}+x)y = 2x^{6}-26x^{4}+97x^{2}-125$} & \cite{BookerSutherland}\\
$\grp{18}$ & \href{https://alpha.lmfdb.org/Genus2Curve/Q/?jump=[[-6,5,-7,2,-1,-1],[0,0,1,1]]}{$y^2 + (x^{3}+x^{2})y = -x^{5}-x^{4}+2x^{3}-7x^{2}+5x-6$} & \cite{BookerSutherland}\\
$\grp{19}$ & \href{https://www.lmfdb.org/Genus2Curve/Q/169/a/169/1}{$y^2 + (x^{3}+x^{2}+1)y = x^{2}+x$} & \cite{MazurTate1973,Ogg1973}\\
$\grp{20}$ & \href{https://alpha.lmfdb.org/Genus2Curve/Q/?jump=[[-2,-7,0,3,-2,1],[0,1,0,1]]}{$y^2 + (x^{3}+x)y = x^{5}-2x^{4}+3x^{3}-7x-2$} & \cite{HLP2000}\\
$\grp{21}$ & \href{https://www.lmfdb.org/Genus2Curve/Q/324/a/648/1}{$y^2 + (x^{3}+x+1)y = x^{5}+2x^{4}+2x^{3}+x^{2}$} & \cite{Ogg1973}\\
$\grp{24}$ & \href{https://alpha.lmfdb.org/Genus2Curve/Q/?jump=[[375,0,97,0,8],[0,1,0,1]]}{$y^2 + (x^{3}+x)y = 8x^{4}+97x^{2}+375$} & \cite{Leprevost1995}\\
$\grp{25}$ & \href{https://alpha.lmfdb.org/Genus2Curve/Q/?jump=[[1,8,13,9,14,4,2],[0,0,1,1]]}{$y^2 + (x^{3}+x^{2})y = 2x^{6}+4x^{5}+14x^{4}+9x^{3}+13x^{2}+8x+1$} & \cite{Leprevost1995,PZP2013}\\
$\grp{27}$ & \href{https://alpha.lmfdb.org/Genus2Curve/Q/?jump=[[0,4,20,17,9,0,1],[1,1]]}{$y^2 + (x+1)y = x^{6}+9x^{4}+17x^{3}+20x^{2}+4x$} & \cite{Leprevost1995}\\
$\grp{28}$ & \href{https://alpha.lmfdb.org/Genus2Curve/Q/?jump=[[4,10,-4,-1,5,-3,1],[0,1,1]]}{$y^2 + (x^{2}+x)y = x^{6}-3x^{5}+5x^{4}-x^{3}-4x^{2}+10x+4$} & \cite{PZP2013,Howe2015}\\
$\grp{30}$ & \href{https://alpha.lmfdb.org/Genus2Curve/Q/?jump=[[2,6,10,11,10,6,2],[1,0,0,1]]}{$y^2 + (x^{3}+1)y = 2x^{6}+6x^{5}+10x^{4}+11x^{3}+10x^{2}+6x+2$} & \cite{HLP2000}\\
$\grp{35}$ & \href{https://alpha.lmfdb.org/Genus2Curve/Q/?jump=[[160,80,166,182,116,30,10],[0,1,1]]}{$y^2 + (x^{2}+x)y = 10x^{6}+30x^{5}+116x^{4}+182x^{3}+166x^{2}+80x+160$} & \cite{HLP2000}\\
$\grp{36}$ & \href{https://alpha.lmfdb.org/Genus2Curve/Q/?jump=[[9,-9,-1,-5,5,-1,1],[0,1,1]]}{$y^2 + (x^{2}+x)y = x^{6}-x^{5}+5x^{4}-5x^{3}-x^{2}-9x+9$} & \cite{PP2012b,Platonov2014}\\
$\grp{40}$ & \href{https://alpha.lmfdb.org/Genus2Curve/Q/?jump=[[66,0,12,0,2],[0,1,0,1]]}{$y^2 + (x^{3}+x)y = 2x^{4}+12x^{2}+66$} & \cite{HLP2000}\\
$\grp{45}$ & $y^2 = 13981x^{6}+29240200x^{4}+49996210000x^{2}+168300000000$ & \cite{HLP2000}\\
$\grp{48}$ & \href{https://alpha.lmfdb.org/Genus2Curve/Q/?jump=[[3,0,9,0,-4],[0,1,0,1]]}{$y^2 + (x^{3}+x)y = -4x^{4}+9x^{2}+3$} & \cite{Howe2015, PP2012b,Platonov2014}\\
$\grp{60}$ & \href{https://alpha.lmfdb.org/Genus2Curve/Q/?jump=[[1,-7,25,-24,25,-7,1],[0,1,1]]}{$y^2 + (x^{2}+x)y = x^{6}-7x^{5}+25x^{4}-24x^{3}+25x^{2}-7x+1$} & \cite{HLP2000}\\
$\grp{63}$ & $y^2 = 897x^{6}-197570x^{4}+79136353x^{2}-146398496$ & \cite{HLP2000}\\
$\grp{70}$ & $y^2 + (2x^{3}-3x^{2}-41x+110)y = x^{3}-51x^{2}+425x+179$ & \cite{Howe2015}\\
$\grp{2,\,2}$ & \href{https://alpha.lmfdb.org/Genus2Curve/Q/?jump=[[-60,-1,59,-1,59,0,-60],[1,1,1,1]]}{$y^2 + (x^{3}+x^{2}+x+1)y = -60x^{6}+59x^{4}-x^{3}+59x^{2}-x-60$} & \cite{BSSVY}\\
$\grp{2,\,4}$ & \href{https://alpha.lmfdb.org/Genus2Curve/Q/?jump=[[-16,6,-6,11,1,3],[1,0,1]]}{$y^2 + (x^{2}+1)y = 3x^{5}+x^{4}+11x^{3}-6x^{2}+6x-16$} & \cite{BSSVY}\\
$\grp{2,\,6}$ & \href{https://alpha.lmfdb.org/Genus2Curve/Q/?jump=[[-23,13,-14,-3,1,-2],[0,1,1]]}{$y^2 + (x^{2}+x)y = -2x^{5}+x^{4}-3x^{3}-14x^{2}+13x-23$} & \cite{BSSVY}\\
$\grp{2,\,8}$ & \href{https://alpha.lmfdb.org/Genus2Curve/Q/?jump=[[-214,465,-279,447,-305,60,-175],[1,0,1]]}{$y^2 + (x^{2}+1)y = -175x^{6}+60x^{5}-305x^{4}+447x^{3}-279x^{2}+465x-214$} & \cite{BSSVY}\\
$\grp{2,\,10}$ & \href{https://www.lmfdb.org/Genus2Curve/Q/256/a/512/1}{$y^2 + (x^{3}+x^{2}+x+1)y = -x^{5}-x^{4}-x^{3}-x^{2}$} & \cite{Ogg1973}\\
$\grp{2,\,12}$ & \href{https://www.lmfdb.org/Genus2Curve/Q/450/a/36450/1}{$y^2 + (x^{3}+x^{2}+x)y = 2x^{5}+4x^{4}-10x^{3}-8x^{2}+15x-5$} & \cite{HLP2000}\\
$\grp{2,\,14}$ & \href{https://alpha.lmfdb.org/Genus2Curve/Q/?jump=[[10,-6,-24,-5,-9,-3,9],[0,1,1]]}{$y^2 + (x^{2}+x)y = 9x^{6}-3x^{5}-9x^{4}-5x^{3}-24x^{2}-6x+10$} & \cite{BookerSutherland}\\
$\grp{2,\,16}$ & \href{https://alpha.lmfdb.org/Genus2Curve/Q/?jump=[[2,7,16,18,16,7,2],[1,1,1,1]]}{$y^2 + (x^{3}+x^{2}+x+1)y = 2x^{6}+7x^{5}+16x^{4}+18x^{3}+16x^{2}+7x+2$} & \cite{BookerSutherland}\\
$\grp{2,\,18}$ & \href{https://alpha.lmfdb.org/Genus2Curve/Q/?jump=[[36,66,12,-5,-3,-3,1],[0,1,1]]}{$y^2 + (x^{2}+x)y = x^{6}-3x^{5}-3x^{4}-5x^{3}+12x^{2}+66x+36$} & \cite{BookerSutherland}\\
$\grp{2,\,20}$ & \href{https://alpha.lmfdb.org/Genus2Curve/Q/?jump=[[0,-18,42,-20,15,-3,1],[0,1,1]]}{$y^2 + (x^{2}+x)y = x^{6}-3x^{5}+15x^{4}-20x^{3}+42x^{2}-18x$} & \cite{BookerSutherland}\\
$\grp{2,\,24}$ & \href{https://alpha.lmfdb.org/Genus2Curve/Q/?jump=[[15,0,-6,0,-1],[0,1,0,1]]}{$y^2 + (x^{3}+x)y = -x^{4}-6x^{2}+15$} & \cite{HLP2000}\\
$\grp{2,\,30}$ & \href{https://alpha.lmfdb.org/Genus2Curve/Q/?jump=[[12,-36,18,23,-13,-5,1],[0,1,1]]}{$y^2 + (x^{2}+x)y = x^{6}-5x^{5}-13x^{4}+23x^{3}+18x^{2}-36x+12$} & \cite{BookerSutherland}\\
$\grp{2,\,48}$ & \href{https://alpha.lmfdb.org/Genus2Curve/Q/?jump=[[1,-13,19,111,19,-13,1],[0,1,1]]}{$y^2 + (x^{2}+x)y = x^{6}-13x^{5}+19x^{4}+111x^{3}+19x^{2}-13x+1$} & \cite{BookerSutherland}\\
$\grp{3,\,3}$ & \href{https://alpha.lmfdb.org/Genus2Curve/Q/?jump=[[-2,6,-8,6,-8,6,-2],[0]]}{$y^2 = -2x^{6}+6x^{5}-8x^{4}+6x^{3}-8x^{2}+6x-2$} & \cite{BSSVY}\\
$\grp{3,\,6}$ & \href{https://alpha.lmfdb.org/Genus2Curve/Q/?jump=[[27,-27,56,-13,21,8,6],[0,1,1]]}{$y^2 + (x^{2}+x)y = 6x^{6}+8x^{5}+21x^{4}-13x^{3}+56x^{2}-27x+27$} & \cite{BSSVY}\\
$\grp{3,\,9}$ & \href{https://alpha.lmfdb.org/Genus2Curve/Q/?jump=[[0,-2,0,5,5,3,1],[1,1,1]]}{$y^2 + (x^{2}+x+1)y = x^{6}+3x^{5}+5x^{4}+5x^{3}-2x$} & \cite{HLP2000}\\
$\grp{3,\,12}$ & \href{https://alpha.lmfdb.org/Genus2Curve/Q/?jump=[[-8,12,6,-11,-3,3,1],[1,1,1]]}{$y^2 + (x^{2}+x+1)y = x^{6}+3x^{5}-3x^{4}-11x^{3}+6x^{2}+12x-8$} & \cite{HLP2000}\\
$\grp{3,\,24}$ & \href{https://alpha.lmfdb.org/Genus2Curve/Q/?jump=[[4,6,-4,-6,5,-3,1],[0,1,1]]}{$y^2 + (x^{2}+x)y = x^{6}-3x^{5}+5x^{4}-6x^{3}-4x^{2}+6x+4$} & \cite{BookerSutherland}\\
$\grp{4,\,4}$ & \href{https://alpha.lmfdb.org/Genus2Curve/Q/?jump=[[448,0,-7,0,-2,0,7],[0,0,1]]}{$y^2 + x^{2}y = 7x^{6}-2x^{4}-7x^{2}+448$} & \cite{BSSVY}\\
$\grp{4,\,8}$ & \href{https://alpha.lmfdb.org/Genus2Curve/Q/?jump=[[0,-15,-5,-5],[0,1,0,1]]}{$y^2 + (x^{3}+x)y = -5x^{3}-5x^{2}-15x$} & \cite{BookerSutherland}\\
$\grp{4,\,12}$ & \href{https://alpha.lmfdb.org/Genus2Curve/Q/?jump=[[9,-27,53,-62,53,-27,9],[0,1,1]]}{$y^2 + (x^{2}+x)y = 9x^{6}-27x^{5}+53x^{4}-62x^{3}+53x^{2}-27x+9$} & \cite{BookerSutherland}\\
$\grp{4,\,16}$ & \href{https://alpha.lmfdb.org/Genus2Curve/Q/?jump=[[252,0,-24,0,-4],[0,1,0,1]]}{$y^2 + (x^{3}+x)y = -4x^{4}-24x^{2}+252$} & \cite{BookerSutherland}\\
$\grp{5,\,5}$ & \href{https://alpha.lmfdb.org/Genus2Curve/Q/?jump=[[-2,4,2,5,2,1],[1,1,1,1]]}{$y^2 + (x^{3}+x^{2}+x+1)y = x^{5}+2x^{4}+5x^{3}+2x^{2}+4x-2$} & \cite{HLP2000}\\
$\grp{5,\,10}$ & \href{https://alpha.lmfdb.org/Genus2Curve/Q/?jump=[[-135,135,11,-33,15,-3,9],[0,1,1]]}{$y^2 + (x^{2}+x)y = 9x^{6}-3x^{5}+15x^{4}-33x^{3}+11x^{2}+135x-135$} & \cite{HLP2000}\\
$\grp{6,\,6}$ & \href{https://www.lmfdb.org/Genus2Curve/Q/196/a/21952/1}{$y^2 + (x^{2}+x)y = x^{6}+3x^{5}+6x^{4}+7x^{3}+6x^{2}+3x+1$} & \cite{HLP2000}\\
$\grp{6,\,12}$ & $y^2 = 132x^{6}+396x^{5}-6347x^{4}-13354x^{3}+75207x^{2}+81950x+88825$ & \cite{HLP2000}\\
$\grp{7,\,7}$ & $y^2 = x^{6}+3025x^{4}+3232987x^{2}+869675859$ & \cite{HLP2000}\\
$\grp{8,\,8}$ & $y^2 = 836x^{6}+88596x^{5}+88597x^{4}+1800118x^{3}-4045487x^{2}-4535664x+84285504$ & \cite{HLP2000}\\
$\grp{2,\,2,\,2}$ & \href{https://alpha.lmfdb.org/Genus2Curve/Q/?jump=[[0,1,32,268,129,16],[0,1]]}{$y^2 + xy = 16x^{5}+129x^{4}+268x^{3}+32x^{2}+x$} & \cite{BSSVY}\\
$\grp{2,\,2,\,4}$ & \href{https://alpha.lmfdb.org/Genus2Curve/Q/?jump=[[0,-15,7,18,0,3,-2],[0,1,0,1]]}{$y^2 + (x^{3}+x)y = -2x^{6}+3x^{5}+18x^{3}+7x^{2}-15x$} & \cite{BSSVY}\\
$\grp{2,\,2,\,6}$ & \href{https://www.lmfdb.org/Genus2Curve/Q/600/a/18000/1}{$y^2 + (x^{3}+x)y = 2x^{5}+3x^{4}-3x^{3}-4x^{2}+x$} & \cite{BSSVY}\\
$\grp{2,\,2,\,8}$ & \href{https://www.lmfdb.org/Genus2Curve/Q/360/a/6480/1}{$y^2 + (x+1)y = 6x^{5}-4x^{4}-5x^{3}+x^{2}+x$} & \cite{BSSVY}\\
$\grp{2,\,2,\,10}$ & \href{https://alpha.lmfdb.org/Genus2Curve/Q/?jump=[[-20,-12,66,-2,-45,3,9],[0,1,1]]}{$y^2 + (x^{2}+x)y = 9x^{6}+3x^{5}-45x^{4}-2x^{3}+66x^{2}-12x-20$} & \cite{BookerSutherland}\\
$\grp{2,\,2,\,12}$ & \href{https://alpha.lmfdb.org/Genus2Curve/Q/?jump=[[1,-9,19,19,-55,-45],[0,1,1]]}{$y^2 + (x^{2}+x)y = -45x^{5}-55x^{4}+19x^{3}+19x^{2}-9x+1$} & \cite{BookerSutherland}\\
$\grp{2,\,2,\,16}$ & \href{https://alpha.lmfdb.org/Genus2Curve/Q/?jump=[[9,-33,3,61,3,-33,-11],[0,1,1]]}{$y^2 + (x^{2}+x)y = -11x^{6}-33x^{5}+3x^{4}+61x^{3}+3x^{2}-33x+9$} & \cite{BookerSutherland}\\
$\grp{2,\,2,\,24}$ & $y^2 = x(52316x-156025)(2500x+3969)(32400x^{2}-34360x+255881)$ & \cite{HLP2000}\\
$\grp{2,\,4,\,4}$ & \href{https://alpha.lmfdb.org/Genus2Curve/Q/?jump=[[0,15,40,0,161,-240],[0,1]]}{$y^2 + xy = -240x^{5}+161x^{4}+40x^{2}+15x$} & \cite{BookerSutherland}\\
$\grp{2,\,4,\,8}$ & \href{https://alpha.lmfdb.org/Genus2Curve/Q/?jump=[[7524,-1530,-10562,797,3630,140],[0,1,1]]}{$y^2 + (x^{2}+x)y = 140x^{5}+3630x^{4}+797x^{3}-10562x^{2}-1530x+7524$} & \cite{HLP2000}\\
$\grp{2,\,6,\,6}$ & \href{https://alpha.lmfdb.org/Genus2Curve/Q/?jump=[[18,-54,9,71,-40,-5,25],[0,1,1]]}{$y^2 + (x^{2}+x)y = 25x^{6}-5x^{5}-40x^{4}+71x^{3}+9x^{2}-54x+18$} & \cite{HLP2000}\\
$\grp{2,\,2,\,2,\,2}$ & \href{https://alpha.lmfdb.org/Genus2Curve/Q/?jump=[[26,41,-44,-30,25,-4],[1,1]]}{$y^2 + (x+1)y = -4x^{5}+25x^{4}-30x^{3}-44x^{2}+41x+26$} & \cite{BookerSutherland}\\
$\grp{2,\,2,\,2,\,4}$ & \href{https://alpha.lmfdb.org/Genus2Curve/Q/?jump=[[0,-21,-6,34,16,-3],[0,1,1]]}{$y^2 + (x^{2}+x)y = -3x^{5}+16x^{4}+34x^{3}-6x^{2}-21x$} & \cite{BookerSutherland}\\
$\grp{2,\,2,\,2,\,6}$ & \href{https://alpha.lmfdb.org/Genus2Curve/Q/?jump=[[0,30,39,-72,-111,-30],[0,1,1]]}{$y^2 + (x^{2}+x)y = -30x^{5}-111x^{4}-72x^{3}+39x^{2}+30x$} & \cite{BookerSutherland}\\
$\grp{2,\,2,\,2,\,8}$ & \href{https://alpha.lmfdb.org/Genus2Curve/Q/?jump=[[-45,42,30,-23,-9,3,1],[0,1,1]]}{$y^2 + (x^{2}+x)y = x^{6}+3x^{5}-9x^{4}-23x^{3}+30x^{2}+42x-45$} & \cite{BookerSutherland}\\
$\grp{2,\,2,\,4,\,4}$ & $y^2 + (x^{2}+x)y = 60x^{5}+1000x^{4}-671x^{3}-5657x^{2}+867x+4913$ & new\\
$\grp{2,\,2,\,4,\,8}$ & $y^2 = x(x+336100^2)(x+835200^2)(x+841500^2)(x+877221^2)$ & \cite{HLP2000}\\
\end{longtable}
\endgroup

\section{Reproducibility and data availability}
\label{sec:reproducibility}

A machine-readable list of the curves and torsion subgroups listed in Tables~\ref{tab:census} and~\ref{tab:splitcensus} is available in the GitHub repository \cite{certification_repo} associated to this paper, along with Magma scripts that certify their torsion subgroups and the geometric simplicity or decomposition of their Jacobians.  These certifications rely on Stoll's algorithm for computing torsion subgroups of genus-2 Jacobians over $\Q$ \cite{Stoll1999}, using the height bounds due to M\"uller and Stoll \cite{MullerStoll2016,MullerStoll2016errata}, as implemented in Magma V2.28-9 or later \cite{Magma}, a criterion of Zywina \cite{Zywina2022} for bounding the geometric endomorphism ring of an abelian variety, which we also use to deduce geometric simplicity, and explicit isogenies or genus-one covering maps to certify geometrically split Jacobians.
This repository also contains scripts to reproduce all Magma computations referenced in this paper.


\begin{thebibliography}{99}

\bibitem{AlessandriCoppola}
J.\ Alessandrì and N.\ Coppola,
Torsion points on $\mathrm{GL}_2$-type abelian varieties,
arXiv:2602.21047 (2026).

\bibitem{certification_repo}
J. S. Balakrishnan, F. Najman, A. Shnidman, and A.V. Sutherland, Genus2Torsion GitHub repository, \url{https://github.com/AndrewVSutherland/Genus2Torsion}, 2026, [Online, accessed August 24, 2026].

\bibitem{notebook_repo}
J. S. Balakrishnan, F. Najman, A. Shnidman, and A.V. Sutherland, Genus2TorsionNotebook GitHub repository, \url{https://github.com/AndrewVSutherland/Genus2TorsionNotebook}, 2026, [Online, accessed August 24, 2026].

\bibitem{BLP2009}
N. Bernard, F. Lepr\'evost, and M. Pohst,
Jacobians of genus-2 curves with a rational point of order 11,
\emph{Experiment. Math.} \textbf{18} (2009), 65--70.

\bibitem{BertinLecacheux}
M.-J. Bertin and O. Lecacheux, Elliptic fibrations on the modular surface associated to $\Gamma_1(8)$, in {\it Arithmetic and geometry of K3 surfaces and Calabi-Yau threefolds}, 153--199, Fields Inst. Commun., 67, Springer, New York, 2013. 

\bibitem{BSSVY}
A. R. Booker, J. Sijsling, A. V. Sutherland, J. Voight, and D. Yasaki,
A database of genus-2 curves over the rational numbers,
\emph{LMS J. Comput. Math.} \textbf{19(A)} (2016), 235--254.

\bibitem{BookerSutherland}
A. R. Booker and A. V. Sutherland,
Genus 2 curves of small conductor, in preparation.

\bibitem{Borisov}
L.~A. Borisov, A finiteness theorem for subgroups of ${\rm Sp}(4,{\bf Z})$, J. Math. Sci. (New York) {\bf 94} (1999), no.~1, 1073--1099.

\bibitem{Magma}
W. Bosma, J. J. Cannon, C. Fieker, A. Steel (eds.), Handbook of Magma functions, Edition 2.29 (2026), 6510 pages.

\bibitem{BoxallGrant2000}
J. Boxall and D. Grant,
Examples of torsion points on genus two curves,
\emph{Trans. Amer. Math. Soc.} \textbf{352} (2000), 4533--4555.

\bibitem{BFS2023}
N. Bruin, E.~V. Flynn and A. Shnidman, Genus two curves with full $\sqrt{3}$-level structure and Tate-Shafarevich groups, Selecta Math. (N.S.) {\bf 29} (2023), no.~3, Paper No. 42, 33 pp.

\bibitem{BFT2014}
N. Bruin, E. V. Flynn, and D. Testa,
Descent via (3,3)-isogeny on Jacobians of genus 2 curves,
\emph{Acta Arith.} \textbf{165} (2014), 201--223.

\bibitem{Choudhry}
A. Choudhry, Equal sums of like powers with minimum number of terms, Integers {\bf 16} (2016), Paper No. A77, 11 pp.; MR3573429

\bibitem{costa}
E. Costa, N. D. Elkies, S. Hashimoto, A. Jha, K. Martin, B. Poonen, and J. Voight,
ModularAbelianSurfaces GitHub repository,
\url{https://github.com/edgarcosta/ModularAbelianSurfaces}, 2022 [Online, accessed August 24, 2026].

\bibitem{DaowsudSchmidt}
K. Daowsud and T. A. Schmidt,
Continued fractions for rational torsion,
\emph{J. Number Theory} \textbf{189} (2018), 115--130.

\bibitem{DaowsudSchmidtCorr}
K. Daowsud and T. A. Schmidt,
Corrigendum to ``Continued fractions for rational torsion,'' \emph{J. Number Theory} \textbf{246} (2023), 326--327.

\bibitem{Elkies2002}
N. D. Elkies,
Curves of genus 2 over \(\Q\) whose Jacobians are absolutely simple abelian
surfaces with torsion points of high order,
\url{https://people.math.harvard.edu/~elkies/g2_tors.html}, 2001--2002,
updated 2010.

\bibitem{Elkies2024}
N. D. Elkies,
Families of genus-2 curves with 5-torsion,
in \emph{LuCaNT: LMFDB, Computation, and Number Theory},
Contemp. Math. \textbf{796}, American Mathematical Society, 2024, 165--185.

\bibitem{EpochAI}
Epoch AI, A genus 2 curve over the rationals with a rational torsion point of prime order at least 31, \url{https://epoch.ai/frontiermath/open-problems/genus-2-jacobian-torsion}, 2026 [Online, accessed August 24, 2026].

\bibitem{Flynn1990}
E. V. Flynn,
Large rational torsion on abelian varieties,
\emph{J. Number Theory} \textbf{36} (1990), 257--265.

\bibitem{Flynn1991}
E. V. Flynn,
Sequences of rational torsions on abelian varieties,
\emph{Invent. Math.} \textbf{106} (1991), 433--442.

\bibitem{Hamahata}
Y. Hamahata, Hilbert modular surfaces with $p_g\leq 1$, Math. Nachr. {\bf 173} (1995), 193--236; MR1336961

\bibitem{HLP2000}
E. W. Howe, F. Lepr\'evost, and B. Poonen,
Large torsion subgroups of split Jacobians of curves of genus two or three,
\emph{Forum Math.} \textbf{12} (2000), 315--364.

\bibitem{Howe2015}
E. W. Howe,
Genus-2 Jacobians with torsion points of large order,
\emph{Bull. London Math. Soc.} \textbf{47} (2015), 127--135.

\bibitem{KuruSadek}
H. Kuru and M. Sadek,
Quadratic torsion orders on Jacobian varieties,
arXiv:2410.14455 (2024).

\bibitem{LSSV}
J. Laga, C.\ Schembri, A.\ Shnidman, and J.\ Voight, Rational torsion points on abelian surfaces with quaternionic multiplication, Forum Math. Sigma {\bf 12} (2024), Paper No. e92, 33 pp.

\bibitem{Lange}
H. Lange, Jacobian surfaces in ${\bf P}_4$, J. Reine Angew. Math. {\bf 372} (1986), 71--86; MR0863519

\bibitem{Leprevost1991a}
F. Lepr\'evost,
Famille de courbes de genre 2 munies d'une classe de diviseurs rationnels
d'ordre 13,
\emph{C. R. Acad. Sci. Paris S\'er. I Math.} \textbf{313} (1991), 451--454.

\bibitem{Leprevost1991}
F. Lepr\'evost,
Familles de courbes de genre 2 munies d'une classe de diviseurs rationnels
d'ordre 15, 17, 19 ou 21,
\emph{C. R. Acad. Sci. Paris S\'er. I Math.} \textbf{313} (1991), 771--774.

\bibitem{Leprevost1993}
F. Lepr\'evost,
Points rationnels de torsion de jacobiennes de certaines courbes de genre 2,
\emph{C. R. Acad. Sci. Paris S\'er. I} \textbf{316} (1993), 819--821.

\bibitem{Leprevost1995}
F. Lepr\'evost,
Jacobiennes de certaines courbes de genre 2: torsion et simplicit\'e,
\emph{J. Th\'eor. Nombres Bordeaux} \textbf{7} (1995), 283--306.

\bibitem{Leprevost1997}
F. Lepr\'evost,
Sur certains sous-groupes de torsion de jacobiennes de courbes
hyperelliptiques de genre \(g\ge 1\),
\emph{Manuscripta Math.} \textbf{92} (1997), 47--63.

\bibitem{LPS2004}
F. Lepr\'evost, M. Pohst, and A. Sch\"opp,
Rational torsion of \(J_0(N)\) for hyperelliptic modular curves and families
of Jacobians of genus 2 and genus 3 curves with a rational point of order 5,
7 or 10, \emph{Abh. Math. Sem. Univ. Hamburg} \textbf{74} (2004), 193--203.

\bibitem{LMFDB} The LMFDB Collaboration, The L-functions and modular forms database, \url{https://www.lmfdb.org}, 2026, [Online; accessed 14 August 2026].

\bibitem{Lombardo2019}
D. Lombardo,
Computing the geometric endomorphism ring of a genus-2 Jacobian,
\emph{Math. Comp.} \textbf{88} (2019), 889--929.

\bibitem{Mazur1977}
B. Mazur,
Modular curves and the Eisenstein ideal,
\emph{Publ. Math. IH\'ES} \textbf{47} (1977), 33--186.

\bibitem{MazurTate1973}
B. Mazur and J. Tate,
Points of order 13 on elliptic curves,
\emph{Invent. Math.} \textbf{22} (1973), 41--49.

\bibitem{MullerStoll2016}
J.S. M\"uller and M. Stoll,
Canonical heights on genus-2 Jacobians,
\emph{Algebra Number Theory} \textbf{10} (2016), no.~10, 2153--2234.

\bibitem{MullerStoll2016errata}
J. S. M\"uller and M. Stoll,
Errata: Canonical heights on genus-2 Jacobians,
\emph{Algebra Number Theory}, published online February 16, 2023.

\bibitem{Nicholls2018}
C. Nicholls,
\emph{Descent Methods and Torsion on Jacobians of Higher Genus Curves},
DPhil thesis, University of Oxford, 2018.

\bibitem{Ogg1973}
A. Ogg,
Rational points on certain elliptic modular curves,
in \emph{Proc. Sympos. Pure Math.} XXIV, American Mathematical Society,
1973, 221--231.

\bibitem{Ogawa1994}
H. Ogawa,
Curves of genus 2 with a rational torsion divisor of order 23,
\emph{Proc. Japan Acad. Ser. A} \textbf{70} (1994), 295--298.

\bibitem{Platonov2014} V. P. Platonov,
Number-theoretic properties of hyperelliptic fields and the torsion
problem in Jacobians of hyperelliptic curves over the rational number
field, \emph{Russian Math. Surveys} \textbf{69} (2014), no. 1, 1--34.

\bibitem{PP2012}
V. P. Platonov and M. M. Petrunin,
New orders of torsion points in Jacobians of curves of genus 2 over the
rational number field, \emph{Dokl. Math.} \textbf{85} (2012), 286--288.

\bibitem{PP2012b}
V. P. Platonov and M. M. Petrunin,
On the torsion problem in Jacobians of curves of genus 2 over the
rational number field, \emph{Dokl. Math.} \textbf{86} (2012), 642--643.

\bibitem{PP2015}
V. P. Platonov and M. M. Petrunin,
New curves of genus 2 over the field of rational numbers whose Jacobians
contain torsion points of high order,
\emph{Dokl. Math.} \textbf{91} (2015), 220--221.

\bibitem{PZP2013}
V. P. Platonov, V. S. Zhgun, and M. M. Petrunin,
On the simplicity of Jacobians for hyperelliptic curves of genus 2 over the
field of rational numbers with torsion points of high order,
\emph{Dokl. Math.} \textbf{87} (2013), 318--321.

\bibitem{Stoll1999}
M. Stoll,
On the height constant for curves of genus two,
\emph{Acta Arith.} \textbf{90} (1999), no.~2, 183--201.

\bibitem{Stoll2001}
M. Stoll,
Implementing 2-descent for Jacobians of hyperelliptic curves,
\emph{Acta Arith.} \textbf{98} (2001), no.~3, 245--277.

\bibitem{zarhin}
Y.~G. Zarhin, Division by 2 on odd degree hyperelliptic curves and their Jacobians, \emph{Izv. Math.} {\bf 83} (2019), no.~3, 501--520; translated from \emph{Izv. Ross. Akad. Nauk Ser. Mat.} {\bf 83} (2019), no.~3, 93--112.

\bibitem{Zywina2022}
D. Zywina,
Determining monodromy groups of abelian varieties,
\emph{Res. Number Theory} \textbf{8} (2022), article 89.

\end{thebibliography}
\end{document}